\newtheorem{thm}{Theorem}
\newtheorem{note}{Note}
\newtheorem{rem}{Remark}
\newtheorem{cor}{Corollary}
\newtheorem{prop}{Proposition}
\newcommand{\R}{\mathbb{R}}
\newcommand{\Lo}{\mathbb{L}}
\newcommand{\B}{\mathcal{B}}
\newcommand{\E}{\mathcal{E}}
\newcommand{\F}{\mathcal{F}}
\newcommand{\G}{\mathcal{G}}
\newcommand{\eL}{\mathcal{L}}
\newcommand{\M}{\mathcal{M}}
\newcommand{\N}{\mathcal{N}}
\newcommand\norm[1]{\left\Vert#1\right\Vert}
\newcommand{\sign}{\text{sgn}}
\title{Born-Infeld Solitons and Existence \& Non-uniqueness of Solutions to the Bj\"{o}rling Problem}
\author{Arka Das\thanks{Indian Institute of Science, Bengaluru, Karnataka, India}}
\date{}
\begin{document}

\maketitle
\begin{abstract}
In this semi-expository article, we study Born-Infeld soliton surfaces as zero mean curvature surfaces and derive conformal parameters for them. Then we present two approaches to solve the Bj\"{o}rling problem for such surfaces, one of them treating them as time-like minimal surfaces and the other one using the Barbashov-Chernikov representation. Finally, we show that the solution to the Bj\"{o}rling problem may not be unique unlike minimal and maximal surfaces.
\end{abstract}

\section{Introduction}
We know that zero mean curvature surfaces in Euclidean space $\mathbb{R}^3$ are minimal surfaces. Similarly, zero mean curvature surfaces in Lorentz-Minkowski space $\mathbb{L}^3$ ($ds^2 = dx^2 + dy^2 - dz^2$) are maximal (which are spacelike) and timelike minimal surfaces. In this article we focus on the Born-Infeld solitons. First consider the Born-Infeld equation:
\begin{equation}\label{BI-graph}
\left(1-\phi_y^2\right)\phi_{xx}+2\phi_x\phi_y\phi_{xy}-\left(1+\phi_x^2\right)\phi_{yy}=0.
\end{equation}
From \cite{whit}, we know that this PDE is hyperbolic if $(\phi_x^2-\phi_y^2+1)>0$. In this article, we will say $\phi=\phi(x,y)$ defined over some open set $\Omega\subseteq \mathbb{R}^2$ is a BI soliton if it satisfies \eqref{BI-graph} and if $(\phi_x^2-\phi_y^2+1)>0$.

By BI soliton surface, we will denote a surface $X:\Omega\to \mathbb{R}^3$ with $X(x,y)=(x,y,\phi(x,y)$, where $\Omega$ is an open set in $\R^3$ and $\phi$ is a BI soliton. Later we will give a definition for generalized Born-Infeld solition surfaces which may not always be a graph of a function. Born-Infeld solitons occur naturally in physics. 

In ~\cite{dey}, Dey and Singh  showed the following proposition for a surface in $\Lo ^3$.
\begin{prop}[Dey \& Singh]\label{DS}
The  Born-Infeld solitons can be represented as a spacelike minimal graph or timelike minimal graph over a domain in timelike plane or a combination of both away from singular points (points where the tangent plane degenerates), i.e., points where the determinant of the coefficients of the first fundamental form vanishes.
\end{prop}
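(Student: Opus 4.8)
The plan is to realize a given solution $\phi$ of \eqref{BI-graph} on $\Omega$ as an explicit zero-mean-curvature graph over a timelike plane in $\Lo^3$, and to read off its causal type and its singular locus directly from the sign of $\phi_x^2-\phi_y^2+1$. Concretely, I would let the ambient $\Lo^3$ carry $ds^2=dX_1^2+dX_2^2-dX_3^2$ and set $Y\colon\Omega\to\Lo^3$, $Y(x,y)=(x,\phi(x,y),y)$, so that $Y$ is the graph $X_2=\phi(X_1,X_3)$ over the coordinate plane $\{X_2=0\}$, whose induced metric $dX_1^2-dX_3^2$ is Lorentzian, i.e.\ timelike. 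A direct computation gives the first fundamental form $E=1+\phi_x^2$, $F=\phi_x\phi_y$, $G=\phi_y^2-1$, hence $EG-F^2=\phi_y^2-\phi_x^2-1$. Thus $Y$ fails to be an immersion exactly on $\{\phi_x^2-\phi_y^2+1=0\}$ — the points where the tangent plane degenerates — and off this set $Y$ is timelike where $\phi_x^2-\phi_y^2+1>0$ and spacelike where $\phi_x^2-\phi_y^2+1<0$.

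Next I would compute the second fundamental form. The Lorentzian cross product satisfies $Y_x\times_{\Lo}Y_y=(\phi_x,-1,-\phi_y)$ with $\langle Y_x\times_{\Lo}Y_y,\,Y_x\times_{\Lo}Y_y\rangle=\phi_x^2-\phi_y^2+1$; so, with $W:=\sqrt{|\phi_x^2-\phi_y^2+1|}$, the unit normal is $n=W^{-1}(\phi_x,-1,-\phi_y)$ (spacelike on the timelike part of $Y$, timelike on the spacelike part). Since $Y_{xx}=(0,\phi_{xx},0)$, and similarly for $Y_{xy}$ and $Y_{yy}$, one obtains $L=-\phi_{xx}/W$, $M=-\phi_{xy}/W$, $N=-\phi_{yy}/W$, and therefore
\[
\begin{aligned}
GL-2FM+EN
&=-\tfrac{1}{W}\bigl[(\phi_y^2-1)\phi_{xx}-2\phi_x\phi_y\phi_{xy}+(1+\phi_x^2)\phi_{yy}\bigr]\\
&=\tfrac{1}{W}\bigl[(1-\phi_y^2)\phi_{xx}+2\phi_x\phi_y\phi_{xy}-(1+\phi_x^2)\phi_{yy}\bigr].
\end{aligned}
\]
Since on each regular piece (spacelike or timelike) the vanishing of the mean curvature is equivalent to $GL-2FM+EN=0$, the surface $Y$ has zero mean curvature if and only if $\phi$ satisfies \eqref{BI-graph}. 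This establishes that every Born-Infeld soliton is, away from its singular locus, a spacelike or timelike minimal graph over the timelike plane $\{X_2=0\}$ — the three cases ``spacelike'', ``timelike'', ``combination of both'' corresponding to $\phi_x^2-\phi_y^2+1$ being everywhere negative, everywhere positive, or sign-changing — and under the standing hypothesis $\phi_x^2-\phi_y^2+1>0$ of this paper there are no singular points, so $Y$ is a timelike minimal graph throughout.

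I expect the only real subtlety to be Lorentzian bookkeeping of two kinds. First, one must graph over a \emph{timelike} coordinate plane: carrying out the same computation for the graph $(x,y,\phi(x,y))$ over the spacelike plane $\{X_3=0\}$ gives $(1-\phi_y^2)\phi_{xx}+2\phi_x\phi_y\phi_{xy}+(1-\phi_x^2)\phi_{yy}=0$, the maximal/timelike-minimal surface equation, not \eqref{BI-graph}; so it is essential that the graphing direction be one of the two spacelike axes over $\{X_2=0\}$, and this choice is exactly what produces the Born-Infeld equation. Second, since both $Y$ and its normal change causal character across $\{\phi_x^2-\phi_y^2+1=0\}$, one has to check that ``zero mean curvature'' is interpreted consistently on the spacelike and the timelike pieces and yields the same PDE on each; this is handled by noting that $W=\sqrt{|\phi_x^2-\phi_y^2+1|}$ enters the numerator $GL-2FM+EN$ only as an overall nonvanishing factor, which clears from the equation $GL-2FM+EN=0$. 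Everything else is the routine fundamental-form computation sketched above.
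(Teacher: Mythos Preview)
Your argument is correct and is precisely the computation the paper has in mind: the paper does not give a separate proof of Proposition~\ref{DS} (it is quoted from \cite{dey}), but its Theorem~\ref{ZMC} carries out exactly the same first- and second-fundamental-form calculation in the isometric model $\B^3$, and Remark~\ref{BI and TMS} then observes that the coordinate relabeling $(x,y,z)\mapsto(x,z,y)$ turns this into the graph $(x,\phi(x,z),z)$ over a timelike plane in $\Lo^3$, which is your $Y(x,y)=(x,\phi(x,y),y)$. So your proof and the paper's implicit argument coincide.
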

If we impose the condition $(\phi_x^2-\phi_y^2+1)>0$, a BI soliton $\phi$ can be represented only as a timelike minimal graph  and not as a spacelike graph. (We will prove this in the next section.)

We endow $\mathbb{R}^3$ with an inner product structure $\langle \cdot , \cdot \rangle_{\B^3}$, defined as $\langle v, w\rangle_{\B^3}=v_1 w_1 -v_2 w_2 +v_3 w_3$. We call this space as $\B^3$. We get a natural cross-product $\times_{\B^3}$ in $\B^3$, that satisfies 
$$
\langle u\times_{\B^3} v, w\rangle_{\B^3}=\det\left(\big[u\; v\; w\big]\right).
$$

We will first show that BI soliton surfaces in $\B^3$ have mean curvature zero.

Next we define the generalized Born-Infeld soliton surfaces using the general parametrization given by Barbashov and Chernikov. 

We may call this parametrization as Barbashov-Chernikov parametrization.

We show that if the generalized Born-Infeld soliton surface is a graph, then it satisfies equation \eqref{BI-graph} and gives a timelike minimal surface away from a subset of domain. We derive conformal parameters for the generalized Born-Infeld soliton surfaces. 

Next we study the Bj\"{o}rling problem for Born-Infeld soliton and generalized BI soliton surfaces using two approaches. 

In the first approach, we use the fact that $(x,y,\phi(x,y))$ will give a BI soliton surface in $\B^3$ if and only if $(\phi(x,y),x,y)$ gives a timelike minimal surface in $\Lo^3$.  We use the Bj\"{o}rling problem and its solution for the timelike minimal surface ~\cite{kim} to get the solution for the BI soliton surface under some sufficient conditions. In \cite{dev}, Manikoth used this approach (using a different method) to get certain sufficient conditions for the  existence of a solution to Bj\"{o}rling problem for BI soliton surfaces. In the second approach we use the Barbashov-Chernikov representation to find some sufficient condition for existence of a solution to Bj\"{o}rling problem for generalized BI soliton surfaces.\\
Finally, we show that the solution to the Bj\"{o}rling problem for generalized BI soliton surfaces is not unique. We show that this is not inconsistent with the result proved in \cite{cha} (where the uniqueness of the solution for a timelike minimal surface was proved in a rhombus shaped domain).

\section{Born-Infeld Soliton Surfaces}

\subsection{Some preliminaries of Born-Infeld soliton surfaces}

\begin{thm}\label{ZMC}
BI soliton surfaces are zero mean curvature surfaces in $\B^3$. 
\end{thm}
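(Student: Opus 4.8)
The plan is to compute the mean curvature of the graph surface $X(x,y)=(x,y,\phi(x,y))$ directly from the first and second fundamental forms with respect to the indefinite inner product $\langle\cdot,\cdot\rangle_{\B^3}$, and to show the numerator of the resulting expression is exactly the left-hand side of the Born-Infeld equation \eqref{BI-graph}, hence vanishes. First I would record the tangent vectors $X_x=(1,0,\phi_x)$ and $X_y=(0,1,\phi_y)$ and compute the coefficients of the first fundamental form in $\B^3$: $E=\langle X_x,X_x\rangle_{\B^3}=1+\phi_x^2$, $F=\langle X_x,X_y\rangle_{\B^3}=\phi_x\phi_y$, and $G=\langle X_y,X_y\rangle_{\B^3}=-1+\phi_y^2$. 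Note $EG-F^2=-(1+\phi_x^2-\phi_y^2)$, which is nonzero precisely because $\phi$ is a BI soliton (the hyperbolicity condition), so the surface is nondegenerate and the mean curvature is well defined.

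Next I would compute the (unnormalized) normal $N=X_x\times_{\B^3}X_y$ using the defining property of $\times_{\B^3}$; a short determinant calculation gives $N=(\phi_x,-\phi_y,1)$ up to the convention for the first two signs, with $\langle N,N\rangle_{\B^3}=\phi_x^2-\phi_y^2+1=-(EG-F^2)$. Then the second fundamental form coefficients are obtained from $X_{xx}=(0,0,\phi_{xx})$, $X_{xy}=(0,0,\phi_{xy})$, $X_{yy}=(0,0,\phi_{yy})$, so that $\langle X_{xx},N\rangle_{\B^3}=\phi_{xx}$, etc., and after dividing by $\norm{N}$ (handling the sign of $\langle N,N\rangle_{\B^3}$) we get $e,f,g$ proportional to $\phi_{xx},\phi_{xy},\phi_{yy}$. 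Plugging into the standard formula
\[
H=\frac{eG-2fF+gE}{2(EG-F^2)}
\]
yields, up to a nonzero scalar factor, the numerator
\[
(-1+\phi_y^2)\phi_{xx}-2\phi_x\phi_y\,\phi_{xy}+(1+\phi_x^2)\phi_{yy},
\]
which is the negative of the left-hand side of \eqref{BI-graph} and therefore zero for a BI soliton.

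The only genuine subtlety — and the step I expect to require the most care — is bookkeeping with the indefinite metric: getting the signs in $\times_{\B^3}$, in $\langle N,N\rangle_{\B^3}$, and in the normalization $N/\sqrt{|\langle N,N\rangle_{\B^3}|}$ all mutually consistent, and confirming that the mean-curvature formula $H=(eG-2fF+gE)/(2(EG-F^2))$ is the correct one in $\B^3$ (it is, since that identity is purely algebraic in $E,F,G,e,f,g$ once one defines $e,f,g$ via the chosen unit normal). I would double-check the final expression by noting it must be symmetric in the roles played by the two spacelike directions and antisymmetric in the timelike direction in the way dictated by \eqref{BI-graph}, which serves as a consistency check. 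Since the hyperbolicity condition guarantees $EG-F^2\neq 0$, the quotient is legitimate everywhere on $\Omega$, and $H\equiv 0$ follows. \qed
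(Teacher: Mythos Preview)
Your proposal is correct and follows essentially the same approach as the paper: compute $E,F,G$ and $e,f,g$ for the graph $X(x,y)=(x,y,\phi(x,y))$ in $\B^3$, then observe that the numerator $eG-2fF+gE$ of the mean-curvature formula is a nonzero multiple of the Born--Infeld equation. The only discrepancy is the sign of the first two components of the cross product (the paper gets $(-\phi_x,\phi_y,1)$ rather than $(\phi_x,-\phi_y,1)$), but as you anticipated this is pure bookkeeping and irrelevant here since only the third component enters $\langle X_{xx},N\rangle_{\B^3}$, etc.
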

\begin{proof}
For a BI soliton surface $X(x,y)=(x,y,\phi(x,y))$, by definition $\phi_x^2-\phi_y^2+1>0 $. Note that $X_x=(1,0,\phi_x)$ and $X_y=(0,1,\phi_y)$. From these, we can calculate the coefficients of the first fundamental form: $\E:=\langle X_x, X_x\rangle_{\B^3} =1+\phi_x^2$, $\F:=\langle X_x, X_y \rangle_{\B^3}  = \phi_x \phi_y$ and $\G:=\langle X_y, X_y \rangle_{\B^3} = -1+\phi_y^2$. The unit normal $N:=\frac{X_x\times X_y}{\norm{X_x\times X_y}}=\frac{(-\phi_x, \phi_y, 1)}{\sqrt{(\phi_x^2-\phi_y^2+1)}}$ satisfies $\langle N, N\rangle _{\B^3}=1$. Write $K=\sqrt{(\phi_x^2-\phi_y^2+1)}>0$. The coefficients of the second fundamental form are calculated to be  $\eL=\langle X_{xx},N\rangle_{\B^3}=\frac{\phi_{xx}}{K}$, $\M=\langle X_{xy},N\rangle_{\B^3}=\frac{\phi_{xy}}{K}$ and $\N=\langle X_{yy},N\rangle_{\B^3}=\frac{\phi_{yy}}{K}$. The mean curvature of $X$ is $H=\left(\frac{\eL\G-2\M\F+\N\E}{2(\E\G-\F^2)}\right)$. As $\E\G-\F^2$ $=-K^2<0$ and $\eL\G-2\M\F+\N\E=\frac{-\left(\left(1-\phi_y^2\right)\phi_{xx}+2\phi_x\phi_y\phi_{xy}-\left(1+\phi_x^2\right)\phi_{yy}\right)}{K}=0$, from equation (\ref{BI-graph}), we get that $X$ is a zero mean curvature surface.
\end{proof}

In \cite{whit}, Whitham describes a method given by Barbashov and Chernikov for solving the Born-Infeld equation (\ref{BI-graph}). The method involves interchanging the dependent and independent variables. To be able to do this at least locally, we need the condition (from Inverse Function Theorem) that $\phi_{xx}\phi_{yy}-\phi_{xy}^2\neq 0$. The functions $\phi(x,y)=f(x+y)$ or $\phi(x,y)=g(x-y)$ for arbitrary $C^2$ functions $f$ or $g$ do not satisfy this condition and although they satisfy the Born-Infeld equation, they are not included in the general solution obtained by this method. 

In this method, initially two new variables $u:=\frac{\phi_x-\phi_y}{2}$ and $v:=\frac{\phi_x+\phi_y}{2}$ and using these, two more new variables $r:=\frac{\sqrt{1+4uv}-1}{2v}$ and $s:=\frac{\sqrt{1+4uv}-1}{2u}$ are defined.

The general solution obtained by this method is given by

\begin{align}
&x-y=F(r)-\int s^2 G'(s) ds\label{x}
\\
&x+y=G(s)-\int r^2 F'(r) dr \label{y}
\\
&\phi=\int rF'(r) dr + \int sG'(s) ds \label{p}
\end{align}
where $F$ and $G$ are arbitrary functions.
\begin{note}\label{BI graph}
For arbitrary $C^2$ functions $F$ and $G$, we may not get a BI soliton, as we may not be able to write $\phi$ as a function of $x$ and $y$. However, if the map $(r,s)\mapsto (x,y)$ is invertible (at least locally), we will get a BI soliton. More simply, if the map $(r,s)\mapsto (x-y,x+y)$ is locally invertible, we will get a BI soliton. A sufficient condition is given by IFT: If for some $r_0$ and $s_0$, $(r_0^2 s_0^2-1) F'(r_0) G'(s_0)\neq 0$, the above formulae gives a BI soliton on a neighborhood of $(r_0,s_0)$.

\end{note}

Observe that $r^2s^2=1$ implies that either $rs=1$ or $rs=-1$. Now, if we substitute the expression of $r$ and $s$ given in \cite{whit} in any of the two equations and simplify, we see that either equation is equivalent to $4uv=\phi_x^2-\phi_y^2=0$, where $u=\frac{\phi_x-\phi_y}{2}$ and $v=\frac{\phi_x+\phi_y}{2}$. However, since $rs=\frac{\left(\sqrt{1+4uv}-1\right)^2}{4uv}$, either $rs$ will be undefined or in case the limit of $rs$ exists as $uv\to 0$, the limiting value of $rs$ will be 0, which is a contradiction! So, the points $(r,s)$ with $r^2s^2=1$ are forbidden in the sense that no point on a BI soliton surface can correspond to a point $(r,s)$ with $r^2s^2=1$. We will discuss more about this in the next subsection.

\begin{rem}\label{BI and TMS}
If we rename $x$, $y$ and $z$ axes as $y$, $z$ and $x$ axes respectively, we will notice that the space $\B^3$ becomes Lorentz-Minkowski space $\Lo^3$ with Lorentzian metric. Moreover, after this renaming, any BI soliton surface $X$ given by $X(x,y)=(x,y,\phi(x,y))$ (in $\B^3$) can be written as a surface $Y$ given by $Y(y,z)=(\phi(y,z),y,z)$ (in $\Lo^3$).  $Y$ is a zero mean curvature surface in $\Lo^3$ (by a calculation similar to those in the proof of Theorem \ref{ZMC} or Proposition \ref{DS}) and as the unit normal $N_Y$ to $Y$ satisfies $\langle N_Y,N_Y\rangle_{\Lo^3}=\langle N_X,N_X\rangle_{\B^3} =1$ (from proof of Theorem \ref{ZMC}), $Y$ is timelike. Hence, it is a timelike minimal surface. The converse also holds true i.e. f we have a timelike minimal surface $Y$ of the form $(\phi(y,z),y,z)$, then $\phi$ is a BI soliton. This result was proved by Dey and Singh in \cite{dey}. Similarly, if we rename $x$, $y$ and $z$ axes as $x$, $z$ and $y$ axes respectively, then the space $\B^3$ becomes Lorentz-Minkowski space $\Lo^3$ with Lorentzian metric. So, if $\phi$ is a Born-Infeld soliton, $(x,\phi(x,z),z)$ will define a timelike minimal surface and conversely, for any timelike minimal surface of the form $(x,\phi(x,z),z)$, $\phi$ satisfies the BI equation. In \cite{dev}, Manikoth showed that any regular timelike minimal surface can be written locally as a graph over either $yz$ or $xz$ plane with the height function being a Born-Infeld soliton.
\end{rem}

\subsection{Domain of BI Soliton in \textit{rs} Plane}
We will see that for a Born-Infeld soliton $\phi$, $r$ and $s$ cannot take any values. Thus, we will find the domains in $rs$ plane, over which a BI soliton can be defined by $\eqref{x}$, $\eqref{y}$ and $\eqref{p}$. The first obvious things to note is that: for $u=\frac{\phi_x-\phi_y}{2}$ and $v=\frac{\phi_x+\phi_y}{2}$, $1+4uv=1+\phi_x^2-\phi_y^2>0$. Using the definitions $r:=\frac{\sqrt{1+4uv}-1}{2v}$ and $s:=\frac{\sqrt{1+4uv}-1}{2u}$, we obtain the following result.
\begin{thm}\label{rs domain}
The domain (whenever defined) of any BI soliton in the $rs$ plane is a subset of $\{(r,s): |rs|< 1\}$.
\end{thm}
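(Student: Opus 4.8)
The plan is to compute the product $rs$ explicitly in terms of $\phi_x,\phi_y$ and to show that the hyperbolicity condition $\phi_x^2-\phi_y^2+1>0$ pins it strictly between $-1$ and $1$. Recall $u=\frac{\phi_x-\phi_y}{2}$ and $v=\frac{\phi_x+\phi_y}{2}$, so that $4uv=\phi_x^2-\phi_y^2$ and hence $1+4uv=1+\phi_x^2-\phi_y^2>0$ for a BI soliton. From the definitions $r=\frac{\sqrt{1+4uv}-1}{2v}$ and $s=\frac{\sqrt{1+4uv}-1}{2u}$, at any point where both are defined (i.e.\ $u\neq 0$ and $v\neq 0$) one has
\[
rs=\frac{\left(\sqrt{1+4uv}-1\right)^2}{4uv}.
\]

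The key algebraic step is to factor the denominator as a difference of squares, $4uv=\left(\sqrt{1+4uv}\right)^2-1=\left(\sqrt{1+4uv}-1\right)\left(\sqrt{1+4uv}+1\right)$, and cancel one factor of $\sqrt{1+4uv}-1$. This yields the clean identity
\[
rs=\frac{\sqrt{1+4uv}-1}{\sqrt{1+4uv}+1}.
\]
Writing $w:=\sqrt{1+4uv}$, the BI soliton condition forces $w>0$, and $rs=\frac{w-1}{w+1}=1-\frac{2}{w+1}$. Since $w>0$ gives $0<\frac{2}{w+1}<2$, we conclude $-1<rs<1$, i.e.\ $|rs|<1$, which is the assertion.

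For completeness I would also record the boundary behaviour: at points where $u=0$ or $v=0$ the formula for $s$ or $r$ is not directly given, but the right-hand side of the displayed identity extends continuously through $uv=0$ with limiting value $0$, consistent with the discussion following Note~\ref{BI graph}; thus no point of a BI soliton surface ever reaches $|rs|=1$. I do not anticipate a genuine obstacle here: the entire content is the difference-of-squares cancellation together with the observation that the inequality $1+4uv>0$ is \emph{strict}, which is precisely what keeps $rs$ away from the value $-1$ approached in the limit $w\to 0^{+}$ (the value $1$ being likewise only approached, as $w\to\infty$).
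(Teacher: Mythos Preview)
Your proof is correct and follows essentially the same route as the paper: both compute $rs$ from its definition, set $t$ (or your $w$) equal to $\sqrt{1+4uv}>0$, and reduce to the identity $rs=\dfrac{t-1}{t+1}$, from which $-1<rs<1$ is immediate. The only cosmetic difference is that the paper bounds $rs+1$ and $rs-1$ separately rather than writing $rs=1-\dfrac{2}{t+1}$ as you do.
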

\begin{proof}
From the definitions of $r$ and $s$, we get that $rs+1=\frac{\left(\sqrt{1+4uv}-1\right)^2}{4uv}+1=\frac{2(t^2-t)}{t^2-1}$, where $t=\sqrt{1+4uv}>0$. So, for $t\neq 1$, $rs+1=\frac{2t}{t+1}>0$ and this holds for $t\to 1$ as well. (For $t=1$, $rs+1$ can be defined only if its limit exists as $t\to 1$.) So, $rs>-1$.\\
Similarly, $rs-1=\frac{\left(\sqrt{1+4uv}-1\right)^2}{4uv}-1=\frac{2(1-t)}{t^2-1}=-\frac{2}{t+1}<0$ for $t\neq 0$. Hence, $rs<1$.
\end{proof}
From now on, we will always be working with Born-Infeld solitons and BI soliton surfaces defined over such domains in $rs$ plane and consider the points $(r,s)$ with $|rs|<1$ only. Thus, from Note \ref{BI graph}, we get that if for any such $r_0$ and $s_0$, $F'(r_0) G'(s_0)\neq 0$, the equations $\eqref{x}$, $\eqref{y}$ and $\eqref{p}$ give a BI soliton in a neighborhood of $(r_0,s_0)$.

\subsection{Generalized Born-Infeld Soliton Surfaces}

We define generalized Born-Infeld soliton surfaces as surfaces of the form $X(r,s)$  $= (x(r,s),  y(r,s), z(r,s))$, where the components are given by the representation formulae of Barbashov and Chernikov:
\begin{align}
&x(r,s)-y(r,s)=F(r)-\int s^2 G'(s) ds\label{xg}
\\
&x(r,s)+y(r,s)=G(s)-\int r^2 F'(r) dr \label{yg}
\\
&z(r,s)=\int rF'(r) dr + \int sG'(s) ds \label{pg}
\end{align}
for arbitrary $C^2$ functions $F$ and $G$.\\

Note that any BI soliton given by \eqref{x}, \eqref{y} and \eqref{p} in the domain $|rs|<1$ gives a BI soliton surface (which is also a generalized BI soliton surface).

In the next section, we will prove that if a generalized BI soliton surface is a graph over $xy$ plane, then away from the points $\{(r,s): F'(r)G'(s)=0\}$, $1 + \phi_x^2 - \phi_y^2 <0$ and it is a Born-Infeld soliton surface.

\subsection{Conformal Parameters for Generalized\\ BI Soliton Surfaces}
Suppose $X(r,s)=(x(r,s),y(r,s),z(r,s))$ defines a generalized  BI soliton surface on a domain $\Omega$. 

The following theorem gives the conformal parameters for such surfaces explicitly.

\begin{thm}\label{conformal}
For a generalized BI soliton surface $X= (x(r,s),  y(r,s), z(r,s))$, the conformal parameters are 
$\alpha=\frac{r+s}{2}$ and $\beta=\frac{r-s}{2}$.
\end{thm}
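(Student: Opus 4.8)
The plan is to compute the first fundamental form of $X$ in the $(r,s)$ parametrization, show it is of Lorentzian type with vanishing $\langle X_r, X_r\rangle$ and $\langle X_s, X_s\rangle$ components (so that $r,s$ are already null coordinates), and then pass to the coordinates $\alpha = \frac{r+s}{2}$, $\beta = \frac{r-s}{2}$ under which the metric becomes conformal to the standard Lorentzian metric $d\alpha^2 - d\beta^2$. Concretely, I would first differentiate the Barbashov--Chernikov formulae \eqref{xg}, \eqref{yg}, \eqref{pg} to get
\begin{align*}
&x_r - y_r = F'(r), \quad x_s - y_s = -s^2 G'(s),\\
&x_r + y_r = -r^2 F'(r), \quad x_s + y_s = G'(s),\\
&z_r = r F'(r), \quad z_s = s G'(s),
\end{align*}
from which $X_r = \left(\tfrac{1-r^2}{2}F'(r),\, -\tfrac{1+r^2}{2}F'(r),\, rF'(r)\right)$ and $X_s = \left(\tfrac{1-s^2}{2}G'(s),\, \tfrac{1+s^2}{2}G'(s),\, sG'(s)\right)$.

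Next I would compute the pairings in the $\B^3$ inner product (recall $\langle v,w\rangle_{\B^3} = v_1 w_1 - v_2 w_2 + v_3 w_3$). The key computation is $\langle X_r, X_r\rangle_{\B^3} = F'(r)^2\left(\tfrac{(1-r^2)^2}{4} - \tfrac{(1+r^2)^2}{4} + r^2\right)$; expanding, $(1-r^2)^2 - (1+r^2)^2 = -4r^2$, so the bracket is $-r^2 + r^2 = 0$. By the same algebra $\langle X_s, X_s\rangle_{\B^3} = 0$. Then $\langle X_r, X_s\rangle_{\B^3} = F'(r)G'(s)\left(\tfrac{(1-r^2)(1-s^2)}{4} - \tfrac{(1+r^2)(1+s^2)}{4} + rs\right)$; the bracket simplifies via $(1-r^2)(1-s^2) - (1+r^2)(1+s^2) = -2r^2 - 2s^2$, giving $-\tfrac{r^2+s^2}{2} + rs = -\tfrac{(r-s)^2}{2}$. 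So in $(r,s)$ coordinates the metric is $-(r-s)^2 F'(r)G'(s)\, dr\, ds$ (up to the symmetric factor of $2$ from the cross term).

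Finally I would change coordinates: with $\alpha = \frac{r+s}{2}$, $\beta = \frac{r-s}{2}$ we have $r = \alpha+\beta$, $s = \alpha - \beta$, so $dr\, ds = (d\alpha + d\beta)(d\alpha - d\beta) = d\alpha^2 - d\beta^2$ and $(r-s)^2 = 4\beta^2$. Hence the induced metric becomes $\lambda^2(d\alpha^2 - d\beta^2)$ with $\lambda^2 = -4\beta^2 F'(\alpha+\beta)G'(\alpha-\beta)$ (matching the sign conventions for a timelike surface, where one expects this conformal factor to be negative, consistent with $1 + \phi_x^2 - \phi_y^2 < 0$ established earlier). This exhibits $(\alpha,\beta)$ as conformal (isothermal) parameters. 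The main obstacle is purely bookkeeping: getting the algebraic simplifications of the three bracket expressions right and being careful about the sign/factor conventions in the definition of ``conformal parameters'' for a timelike (Lorentzian) surface — i.e. checking that the target metric is $d\alpha^2 - d\beta^2$ rather than $d\alpha^2 + d\beta^2$, and that the nonvanishing of $\lambda$ is exactly the regularity condition $F'(r)G'(s) \neq 0$ (together with $\beta \neq 0$, i.e. $r \neq s$) already singled out in the preceding discussion.
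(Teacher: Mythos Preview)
Your overall strategy is exactly the paper's: compute the first fundamental form in $(r,s)$, observe that $\langle X_r,X_r\rangle_{\B^3}=\langle X_s,X_s\rangle_{\B^3}=0$ so that $(r,s)$ are null coordinates, and then rotate to $(\alpha,\beta)$. However, your computation of the cross term $\langle X_r,X_s\rangle_{\B^3}$ contains a sign slip. With $X_r=\bigl(\tfrac{1-r^2}{2}F',\,-\tfrac{1+r^2}{2}F',\,rF'\bigr)$ and $X_s=\bigl(\tfrac{1-s^2}{2}G',\,\tfrac{1+s^2}{2}G',\,sG'\bigr)$, the middle term of the $\B^3$ pairing is $-\,(X_r)_2(X_s)_2 = -\bigl(-\tfrac{1+r^2}{2}F'\bigr)\bigl(\tfrac{1+s^2}{2}G'\bigr) = +\tfrac{(1+r^2)(1+s^2)}{4}F'G'$, not the minus you wrote. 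The correct bracket is therefore
\[
\frac{(1-r^2)(1-s^2)}{4}+\frac{(1+r^2)(1+s^2)}{4}+rs=\frac{(1+rs)^2}{2},
\]
so $\langle X_r,X_s\rangle_{\B^3}=\tfrac{(1+rs)^2}{2}F'(r)G'(s)$, and after the change of variables the conformal factor is $(1+rs)^2F'(r)G'(s)$, exactly as in the paper's proof.

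This is not just cosmetic bookkeeping: your erroneous factor $-(r-s)^2F'G'=-4\beta^2F'G'$ vanishes along the entire diagonal $\beta=0$, which would force the surface to be singular there regardless of $F',G'$. That contradicts Corollary~\ref{non-regularity} (regularity fails only where $F'G'=0$, since $1+rs>0$ on $|rs|<1$) and would wreck the later Bj\"orling analysis, which places the prescribed curve precisely on $\beta=0$. Once the sign is fixed, your argument and the paper's coincide.
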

\begin{proof}
 Note that $(x+y)_r=-r^2 F'(r)$, $(x+y)_s=G'(s)$, $(x-y)_r=F'(r)$, $(x-y)_s=-s^2 G'(s)$, $z_r=rF'(r)$ and $z_s=sG'(s)$. Hence, $\langle X_r, X_r\rangle_{\B^3}=x_{r}^2-y_{r}^2+z_{r}^2=(x+y)_{r} (x-y)_{r}+z_{r}^2=0$ and similarly, $\langle X_s, X_s\rangle_{\B^3}=0$.\\
As $r=\alpha+\beta$ and $s=\alpha-\beta$, using Chain rule we deduce that $\frac{\partial}{\partial \alpha}=\left(\frac{\partial}{\partial r}+\frac{\partial}{\partial s}\right)$ and $\frac{\partial}{\partial \beta}=\left(\frac{\partial}{\partial r}-\frac{\partial}{\partial s}\right)$. So, we have $(x+y)_{\alpha}$ $=$ $\left(-r^2 F'(r)+G'(s)\right)$, $(x+y)_{\beta}$ $=$ $\left(-r^2 F'(r)-G'(s)\right)$, $(x-y)_{\alpha}=\left(F'(r)-s^2 G'(s)\right)$, $(x-y)_{\beta}=\left(F'(r)+s^2 G'(s)\right)$, $z_{\alpha}=\left(rF'(r)+sG'(s)\right)$ and $z_{\beta}=\left(rF'(r)-sG'(s)\right)$.\\
Thus, we get $\langle X_{\alpha}, X_{\alpha} \rangle_{\B^3} = x_{\alpha}^2-y_{\alpha}^2+z_{\alpha}^2=(x+y)_{\alpha} (x-y)_{\alpha}+z_{\alpha}^2=(rs+1)^2 F'(r)G'(s)$ and similarly, $\langle X_{\beta}, X_{\beta} \rangle _{\B^3}=-(rs+1)^2 F'(r)G'(s)=-\langle X_{\alpha}, X_{\alpha} \rangle_{\B^3}$. Finally, as for any function $f$, $f_{\alpha} f_{\beta}=\left(f_r^2-f_s^2\right)$, we get $\langle X_{\alpha}, X_{\beta}\rangle_{\B^3}=x_{\alpha} x_{\beta}-y_{\alpha} y_{\beta}+z_{\alpha} z_{\beta}=\left(\langle X_r, X_r\rangle_{\B^3}-\langle X_s, X_s\rangle_{\B^3} \right)=0$. So, $\alpha$, $\beta$ are the conformal parameters for the generalized BI soliton surface $X$.
\end{proof}

The following corollary follows from the above proof.
\begin{cor}\label{non-regularity}
For $C^2$ functions $F$ and $G$, if $F'(r)=0$ or $G'(s)=0$ for some $r$ and $s$, then they will correspond to a non-regularity on the surface $X$.
\end{cor}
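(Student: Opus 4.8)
The plan is to read off the conclusion directly from the computations already carried out in the proof of Theorem~\ref{conformal}: a zero of $F'$ or of $G'$ forces one of the coordinate velocity vectors of $X$ to vanish, which is incompatible with regularity.

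Concretely, suppose $F'(r_0)=0$ for some $r_0$. The proof of Theorem~\ref{conformal} records $(x+y)_r=-r^2F'(r)$, $(x-y)_r=F'(r)$ and $z_r=rF'(r)$; evaluating at $r=r_0$, all three vanish, hence $x_r=y_r=z_r=0$ there, i.e. $X_r=(0,0,0)$ along the whole line $\{r=r_0\}$ in the $rs$-plane. Since $X_\alpha=X_r+X_s$ and $X_\beta=X_r-X_s$, at such points $X_\alpha=X_s$ and $X_\beta=-X_s$, so $\{X_\alpha,X_\beta\}$ is linearly dependent (indeed $X_\alpha+X_\beta=0$); thus $X$ is not an immersion there. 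Equivalently, the coefficient matrix of the first fundamental form has determinant $\langle X_\alpha,X_\alpha\rangle_{\B^3}\langle X_\beta,X_\beta\rangle_{\B^3}-\langle X_\alpha,X_\beta\rangle_{\B^3}^2=-(rs+1)^4(F'(r))^2(G'(s))^2$, which (using $|rs|<1$, so $rs+1\neq 0$, from Theorem~\ref{rs domain}) vanishes precisely when $F'(r)=0$ or $G'(s)=0$. The case $G'(s_0)=0$ is symmetric, with $(x+y)_s=G'(s)$, $(x-y)_s=-s^2G'(s)$ and $z_s=sG'(s)$ all vanishing at $s=s_0$, forcing $X_s=(0,0,0)$ and hence $X_\alpha=X_r=X_\beta$. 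In every case these are non-regular (singular) points of $X$.

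I do not anticipate a genuine obstacle — the statement is an immediate corollary of the formulas in the proof of Theorem~\ref{conformal}. The only point needing care is that in $\B^3$ a tangent vector $v$ with $\langle v,v\rangle_{\B^3}=0$ need not be the zero vector, so the vanishing of $\langle X_\alpha,X_\alpha\rangle_{\B^3}$ alone would not establish non-regularity; the argument must exhibit the actual vanishing of $X_r$ (resp. $X_s$), or equivalently of the determinant of the first fundamental form, as above.
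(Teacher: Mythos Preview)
Your proof is correct but takes a different route from the paper's. You observe directly from the formulas in Theorem~\ref{conformal} that $F'(r_0)=0$ forces the coordinate vector $X_r$ itself to vanish (and symmetrically $G'(s_0)=0$ forces $X_s=0$), so that $X_\alpha=\pm X_\beta$ and the parametrization fails to be an immersion. The paper instead argues by contradiction purely at the level of the induced metric: when $(rs+1)^2F'(r)G'(s)=0$ all three inner products $\langle X_\alpha,X_\alpha\rangle_{\B^3}$, $\langle X_\beta,X_\beta\rangle_{\B^3}$, $\langle X_\alpha,X_\beta\rangle_{\B^3}$ vanish, so if $X_\alpha,X_\beta$ were independent every vector in their span would be null; but $\B^3$ (signature $(2,1)$) contains no totally null $2$-plane, a contradiction. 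Your argument is more elementary and yields extra information (it pinpoints which coordinate vector collapses), while the paper's is a geometric argument that would transfer to situations where one only knows the first fundamental form and not the coordinate vectors explicitly.

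Two small remarks on your write-up. First, the ``equivalently'' aside about the Gram determinant is not by itself a complete argument in indefinite signature: vanishing of the Gram determinant does not force linear dependence (e.g.\ $(1,1,0)$ and $(0,0,1)$ in $\B^3$ are independent with Gram determinant zero), so it is your direct computation $X_r=0$ (resp.\ $X_s=0$) that really establishes non-regularity in the sense of failure of immersion. Second, invoking Theorem~\ref{rs domain} to get $rs+1\neq 0$ is not quite on point here, since the corollary concerns generalized BI soliton surfaces, which carry no $|rs|<1$ restriction; fortunately your main argument does not need this.
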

\begin{proof}
Suppose $(rs+1)^2 F'(r) G'(s)=0$. If $X_{\alpha}$ and $X_{\beta}$ are linearly independent, then they will span the tangent plane. Now, from the proof of Theorem \ref{conformal}, $\langle X_{\alpha}, X_{\alpha}\rangle_{\B^3}=\langle X_{\alpha}, X_{\beta}\rangle_{\B^3}=\langle X_{\beta}, X_{\beta}\rangle_{\B^3}=0$ at that point. Hence, $\langle V, V\rangle_{\B^3} =0$ for any vector $V$ in that tangent space (as any vector in the tangent plane will be a linear combination of $X_{\alpha}$ and $X_{\beta}$). But the set of vectors $V$ in $\B^3$, that satisfy $\langle V,V\rangle=0$ does not contain any plane. So, $X_{\alpha}$ and $X_{\beta}$ are linearly dependent and hence the corresponding point will not be regular.
\end{proof}

The following remark says that if $X=(x(r,s),y(r,s),z(r,s))$ is a generalized BI soliton surface, then $Y=(z(r,s),x(r,s),y(r,s))$ gives a timelike minimal surface away from a certain subset of the domain.

\begin{rem}\label{TMS from BI}
For any $C^2$ functions $F$ and $G$, if we define $x$, $y$ and $z$ by $\eqref{xg}$, $\eqref{yg}$ and $\eqref{pg}$ (here we do not assume that $z$ can be written as a function of $x$ and $y$), then we can define a map $Y:=(z,x,y)$ from the $rs$ plane (or some domain in $rs$ plane) to $\Lo^3$. Then $\langle Y_{\alpha}, Y_{\alpha} \rangle_{\Lo^3} = x_{\alpha}^2-y_{\alpha}^2+z_{\alpha}^2=(rs+1)^2 F'(r)G'(s)=-\langle Y_{\beta}, Y_{\beta}\rangle_{\Lo^3}$ and $\langle Y_{\alpha}, Y_{\beta}\rangle_{\Lo^3}=x_{\alpha} x_{\beta}-y_{\alpha} y_{\beta}+z_{\alpha} z_{\beta}=0$ from the proof of Theorem \ref{conformal}.) As $\frac{\partial}{\partial \alpha}=\left(\frac{\partial}{\partial r}+\frac{\partial}{\partial s}\right)$ and $\frac{\partial}{\partial \beta}=\left(\frac{\partial}{\partial r}-\frac{\partial}{\partial s}\right)$, it is easy to check that for any $C^2$ function $f$, $f_{\alpha\alpha}-f_{\beta\beta}=4 f_{rs}$. Now, as each of $x$, $y$ and $z$ is the sum of a function of $r$ and a function of $s$, $x_{rs}=y_{rs}=\phi_{rs}=0$ and hence, $Y_{\alpha\alpha}-Y_{\beta\beta}=4 Y_{rs}=0$.

So, away from the points $(r,s)$, where $(rs+1)^2 F'(r)G'(s)=0$, $Y$ defines a timelike minimal surface (this follows from the characterization of timelike minimal surface in terms of conformal coordinates).
\end{rem}

For a timelike minimal surface $Y$ given by conformal parameters $\alpha$, $\beta$, we know that  $Y_{\alpha\alpha}-Y_{\beta\beta}=0$. From Remark \ref{BI and TMS}, we know that we can get a timelike minimal surface from a BI soliton surface just by renaming the coordinates. So, it follows naturally that if $X=(x,y,\phi)$ is a BI soliton surface given by \eqref{x}, \eqref{y} and \eqref{p}, then $X_{\alpha\alpha}-X_{\beta\beta}=0$, where $\alpha=\frac{r+s}{2}$ and $\beta=\frac{r-s}{2}$. It can be verified directly as well. Thus we have 

\begin{prop}\label{laplacian}
For a BI soliton $\phi$ given by the general solution $\eqref{x}$, $\eqref{y}$ and $\eqref{p}$, if $\alpha:=\frac{r+s}{2}$ and $\beta:=\frac{r-s}{2}$, then the corresponding BI soliton surface $X$ satisfy $X_{\alpha\alpha}-X_{\beta\beta}=0$.
\end{prop}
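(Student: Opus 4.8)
The plan is to verify $X_{\alpha\alpha}-X_{\beta\beta}=0$ directly from the explicit derivatives that were already assembled in the proof of Theorem \ref{conformal}, since everything needed is essentially on the table. The key structural observation is that each component of $X$ — namely $x-y$, $x+y$ and $\phi$ — is, up to the integrals, a sum of a function of $r$ alone and a function of $s$ alone. Indeed from \eqref{x}, \eqref{y}, \eqref{p} we read off $(x-y)(r,s)=F(r)-\int s^2G'(s)\,ds$, $(x+y)(r,s)=G(s)-\int r^2F'(r)\,dr$ and $\phi(r,s)=\int rF'(r)\,dr+\int sG'(s)\,ds$, so each of these has mixed partial $\partial_r\partial_s(\cdot)=0$.

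First I would record the chain-rule identity: since $r=\alpha+\beta$ and $s=\alpha-\beta$, we have $\partial_\alpha=\partial_r+\partial_s$ and $\partial_\beta=\partial_r-\partial_s$, hence for any $C^2$ function $f$ of $(r,s)$,
\[
f_{\alpha\alpha}-f_{\beta\beta}=(\partial_r+\partial_s)^2 f-(\partial_r-\partial_s)^2 f=4f_{rs}.
\]
Next I would apply this with $f$ equal to each of $x-y$, $x+y$ and $\phi$. For each one $f_{rs}=0$ by the separation-of-variables remark above, so $(x-y)_{\alpha\alpha}-(x-y)_{\beta\beta}=0$, $(x+y)_{\alpha\alpha}-(x+y)_{\beta\beta}=0$ and $\phi_{\alpha\alpha}-\phi_{\beta\beta}=0$. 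Taking the appropriate half-sum and half-difference of the first two gives $x_{\alpha\alpha}-x_{\beta\beta}=0$ and $y_{\alpha\alpha}-y_{\beta\beta}=0$, and combining with the $\phi$ equation yields $X_{\alpha\alpha}-X_{\beta\beta}=0$ componentwise, which is the claim.

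Alternatively, and even more cheaply, one can invoke Remark \ref{BI and TMS} together with Remark \ref{TMS from BI}: renaming coordinates turns $X$ into the timelike minimal surface $Y=(\phi,x,y)$, for which $Y_{\alpha\alpha}-Y_{\beta\beta}=0$ is already established (it was checked there via $f_{\alpha\alpha}-f_{\beta\beta}=4f_{rs}$ and $x_{rs}=y_{rs}=\phi_{rs}=0$); since the relation $X_{\alpha\alpha}-X_{\beta\beta}=0$ is unchanged by permuting the three coordinate slots, it transfers back to $X$. I would present the direct computation as the main argument and mention this second route in a sentence, since it is the conceptual reason the identity holds. There is no real obstacle here — the only thing to be careful about is making sure the integrals in \eqref{x}, \eqref{y}, \eqref{p} are genuinely of the stated separated form so that the mixed partials vanish, but that is immediate from inspection; the computation is entirely routine once the $4f_{rs}$ identity is in hand.
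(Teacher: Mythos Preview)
Your proposal is correct and matches the paper's treatment: the paper also uses the identity $f_{\alpha\alpha}-f_{\beta\beta}=4f_{rs}$ together with the fact that each of $x$, $y$, $\phi$ is a sum of a function of $r$ and a function of $s$ (this is the content of Remark~\ref{TMS from BI}), and it also invokes the correspondence with timelike minimal surfaces from Remark~\ref{BI and TMS}. The only difference is emphasis---the paper leads with the conceptual route and remarks that ``it can be verified directly as well,'' whereas you lead with the direct verification and mention the conceptual route second.
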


\begin{prop}\label{general BI to BI}
If a generalized BI soliton surface $X=(x,y,z)$ is a graph over $xy$ plane, then $z$ as a function of $x,y$ will give a BI soliton on $\{(r,s):|rs|<1\}$ away from the points where $F'(r)G'(s)=0$.
\end{prop}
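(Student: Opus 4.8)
The plan is to exploit the converse direction of the relationship established in Remark \ref{TMS from BI}: whenever $X=(x,y,z)$ is a generalized BI soliton surface that happens to be a graph $z=z(x,y)$, the renamed map $Y=(z,x,y)$ is (away from the bad set $F'(r)G'(s)=0$) a timelike minimal surface in $\Lo^3$, and it is a graph over the $yz$-plane — in the $\Lo^3$ coordinates, over the plane with coordinates $(x,y)$. Since Dey and Singh's result (Remark \ref{BI and TMS}) says that a timelike minimal surface of the form $(\phi(y,z),y,z)$ forces $\phi$ to satisfy the Born-Infeld equation, we conclude immediately that $z(x,y)$ satisfies \eqref{BI-graph}. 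So the real content left to check is the inequality $\phi_x^2-\phi_y^2+1>0$, i.e. that on the admissible region the surface genuinely is a BI soliton and not merely a solution of the PDE.

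First I would make the graph hypothesis precise: assuming $X$ is a graph over the $xy$-plane means the map $(r,s)\mapsto(x(r,s),y(r,s))$ is (locally) invertible, so by the Inverse Function Theorem its Jacobian $J=x_ry_s-x_sy_r$ is nonzero on the relevant region. Using the derivative formulas from the proof of Theorem \ref{conformal} — namely $(x\pm y)_r$ and $(x\pm y)_s$ in terms of $F'(r)$, $G'(s)$, $r$, $s$ — I would compute $J$ explicitly; it should come out proportional to $(r^2s^2-1)F'(r)G'(s)$, which is nonzero exactly away from $F'(r)G'(s)=0$ and on $|rs|<1$ (recall $|rs|<1$ is automatic for BI solitons by Theorem \ref{rs domain}, but here it must be \emph{imposed} as part of the conclusion's statement, or derived from the graph condition — I would note that $|rs|\neq 1$ follows from $J\neq 0$, and then the sign analysis of $rs\pm 1$ from the proof of Theorem \ref{rs domain} still applies once one knows the point came from an actual $(u,v)$, which it does since $r,s$ are defined from $\phi_x,\phi_y$). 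Then I would express $\phi_x=z_x$ and $\phi_y=z_y$ via the chain rule, inverting the $(r,s)\to(x,y)$ map, and plug into $\phi_x^2-\phi_y^2+1$. Alternatively, and more cleanly, I would invoke the conformal-parameter computation: from Theorem \ref{conformal}, $\langle X_\alpha,X_\alpha\rangle_{\B^3}=(rs+1)^2F'(r)G'(s)$, and the sign of the conformal factor together with $\langle N,N\rangle_{\B^3}=1$ (the normal is spacelike, as in the proof of Theorem \ref{ZMC}) pins down that $\E\G-\F^2<0$ for the graph parametrization, which is precisely the statement $\phi_x^2-\phi_y^2+1>0$.

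The step I expect to be the main obstacle is handling the sign bookkeeping carefully: a generalized BI soliton surface can locally be timelike or spacelike depending on the sign of $F'(r)G'(s)$, and one must verify that the graph-over-$xy$-plane condition is compatible only with the timelike case, so that the resulting $z(x,y)$ lands in the regime $\phi_x^2-\phi_y^2+1>0$ rather than $<0$. Concretely, I would argue: the surface $X$ is a graph over $xy$ means the third coordinate is the "height" and the tangent plane never contains a vertical direction; combined with $\langle N,N\rangle_{\B^3}=1$ (spacelike normal, forced by the conformal structure away from the bad set), the induced metric on the tangent plane has signature $(+,-)$, hence $\det(\text{I})=\E\G-\F^2<0$, which rearranges to the desired inequality — this also rules out $F'(r)G'(s)>0$ producing a spacelike graph over $xy$. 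Once the inequality is in hand, equation \eqref{BI-graph} follows either by the zero-mean-curvature computation of Theorem \ref{ZMC} run in reverse (now $H=0$ is known because $X$ is a generalized BI soliton surface, and $\E\G-\F^2\neq 0$, so the numerator $\eL\G-2\M\F+\N\E$ vanishes, which is exactly the Born-Infeld equation up to the factor $-1/K$) or directly via the Dey–Singh converse. Finally I would remark that the excluded set $\{F'(r)G'(s)=0\}$ is exactly the non-regular locus identified in Corollary \ref{non-regularity}, so the hypothesis "away from these points" is natural and unavoidable.
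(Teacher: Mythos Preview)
Your proposal is correct and follows essentially the same route as the paper: invoke Remark~\ref{TMS from BI} to see that $Y=(z,x,y)$ is a timelike minimal surface away from $(rs+1)^2F'(r)G'(s)=0$, deduce $\phi_x^2-\phi_y^2+1>0$ from the fact that the normal to a timelike surface is spacelike, and then cite the Dey--Singh converse (Remark~\ref{BI and TMS}) for the Born--Infeld equation. Your worry about ``sign bookkeeping'' is unnecessary, since Remark~\ref{TMS from BI} already guarantees the surface is timelike whenever the conformal factor is nonzero (the induced metric in $(\alpha,\beta)$ automatically has signature $(+,-)$), so there is no spacelike case to rule out; the paper simply computes the normal $\tilde n=(1,-\phi_x,\phi_y)$ in graph coordinates and reads off $\langle\tilde n,\tilde n\rangle_{\Lo^3}=1+\phi_x^2-\phi_y^2>0$ directly.
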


\begin{proof}
Let $z$ as a function of $x,y$ be given as $\phi=\phi(x,y)$. Now, from Remark \ref{TMS from BI}, $Y=(\phi(x,y),x,y)$ will denote a timelike minimal surface away from points with $(rs+1)^2 F'(r)G'(s)=0$. The normal to $Y$ (in terms of $x$ and $y$) is along $\tilde{n}=Y_x\times_{\Lo^3} Y_y=(1, -\phi_x,\phi_y)$. So, $\phi_x^2-\phi_y^2+1=\langle Y_x,Y_y\rangle_{\Lo^3}>0$ (as $Y$ is timelike, its normal is spacelike). Thus, $\phi$ satisfies the hyperbolicity condition. From Remark \ref{BI and TMS}, as $Y=(\phi(x,y),x,y)$ defines a timelike minimal surface away from points with $(rs+1)^2 F'(r)G'(s)=0$, $\phi$ satisfies the BI equation away from those points. Hence, $\phi$ is a BI soliton away from these points.
\end{proof}

\subsection{Formula for \textit{N} for a BI Soliton Surface}
In this section, we will derive the formula for the unit normal $N$ to a BI soliton surface in terms of $r$ and $s$, where the corresponding BI soliton is given by $\eqref{x}$, $\eqref{y}$ and $\eqref{p}$. It is quite interesting that $N$ does not depend on $F$ and $G$, a result proved in \cite{dev}. More interestingly, given the unit normal $N$ at any point, we can calculate the corresponding point in $rs$ plane. The results are summarized below.

\begin{thm}\label{normal}
For a BI soliton $\phi=\phi(x,y)$ given by $\eqref{x}$, $\eqref{y}$ and $\eqref{p}$, the unit normal $N$ to the corresponding BI soliton surface at a regular parameter point $(r,s)$ is given by $\widetilde{N}:=\left(\frac{r+s}{1+rs}, \frac{r-s}{1+rs}, \frac{rs-1}{1+rs}\right)$ or $-\widetilde{N}$.
\end{thm}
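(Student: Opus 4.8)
The plan is to compute the unit normal directly from the Barbashov–Chernikov parametrization and then simplify using the relation $1+rs = \frac{2t}{t+1}$ (with $t = \sqrt{1+4uv}$) established in the proof of Theorem~\ref{rs domain}. First I would recall from the proof of Theorem~\ref{conformal} the first partials in the $(r,s)$ coordinates: $(x+y)_r = -r^2 F'(r)$, $(x+y)_s = G'(s)$, $(x-y)_r = F'(r)$, $(x-y)_s = -s^2 G'(s)$, $z_r = rF'(r)$, $z_s = sG'(s)$. From these one recovers $x_r, y_r, x_s, y_s$ by solving the two-by-two linear system, e.g. $x_r = \tfrac12\big(F'(r) - r^2 F'(r)\big) = \tfrac12(1-r^2)F'(r)$ and $y_r = \tfrac12\big(-r^2 F'(r) - F'(r)\big) = -\tfrac12(1+r^2)F'(r)$, and similarly $x_s = \tfrac12(1 - s^2)G'(s)$, $y_s = \tfrac12(1+s^2)G'(s)$ (signs to be checked against the conventions in \eqref{xg}–\eqref{pg}).

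Next I would form the cross product $X_r \times_{\B^3} X_s$ using the defining property $\langle u \times_{\B^3} v, w\rangle_{\B^3} = \det[u\; v\; w]$, which in the $\B^3$ inner product $\langle v,w\rangle_{\B^3} = v_1w_1 - v_2w_2 + v_3w_3$ means that the cross product has components $(X_r\times X_s)_1 = -(y_r z_s - z_r y_s)$, $(X_r\times X_s)_2 = -(z_r x_s - x_r z_s)$, $(X_r\times X_s)_3 = x_r y_s - y_r x_s$ (the sign pattern coming from lowering indices with the metric; I would verify this carefully). Substituting the partials above, each component factors as a common scalar multiple of $F'(r)G'(s)$ times a polynomial in $r$ and $s$; the expected outcome, after collecting terms, is that $X_r \times_{\B^3} X_s$ is proportional to $\big(r+s,\ r-s,\ rs-1\big)$. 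The key cancellation is that the coefficient simplifies to something like $\tfrac12 F'(r)G'(s)(1+rs)$ up to sign — here I would use $(1-r^2)(1+s^2) - (1+r^2)(1-s^2) = -4(r^2 - s^2)$ and similar identities, and crucially the combination $z_r y_s - y_r z_s$ etc. to get the $(1+rs)$ factor.

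Then I would compute the Minkowski norm: $\langle X_r\times X_s, X_r\times X_s\rangle_{\B^3} = (r+s)^2 - (r-s)^2 + (rs-1)^2 = 4rs + (rs-1)^2 = (rs+1)^2$ (times the square of the scalar prefactor). Since at a regular point the prefactor $F'(r)G'(s)$ is nonzero and $|rs| < 1$ forces $rs + 1 > 0$, the norm of $X_r\times X_s$ is the scalar prefactor times $(1+rs)$, and dividing gives $N = \pm\big(\tfrac{r+s}{1+rs}, \tfrac{r-s}{1+rs}, \tfrac{rs-1}{1+rs}\big) = \pm\widetilde N$, with the sign depending on the sign of $F'(r)G'(s)$. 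One then checks $\langle \widetilde N, \widetilde N\rangle_{\B^3} = \tfrac{(r+s)^2 - (r-s)^2 + (rs-1)^2}{(1+rs)^2} = \tfrac{(1+rs)^2}{(1+rs)^2} = 1$, consistent with Theorem~\ref{ZMC}. The main obstacle I anticipate is bookkeeping: getting the signs in the $\B^3$-cross-product right and making sure the algebraic factorization actually collapses to the claimed $(1+rs)$ factor rather than some more complicated expression — this is where a sign error would masquerade as a genuinely different normal. I would guard against this by cross-checking the final formula against the graph-case normal $N = \frac{(-\phi_x, \phi_y, 1)}{\sqrt{\phi_x^2 - \phi_y^2 + 1}}$ from Theorem~\ref{ZMC}, using $\phi_x = u + v$, $\phi_y = v - u$ and the definitions of $r, s$ to confirm the two expressions agree on the overlap.
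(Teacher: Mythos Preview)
Your proposal is essentially the paper's own proof: compute $X_r, X_s$ from the Barbashov--Chernikov partials, take the $\B^3$-cross product (the paper writes it as the formal determinant with $-\hat{j}$ in the second slot of the top row, obtaining $X_r\times_{\B^3} X_s = -\tfrac{1}{2}F'(r)G'(s)(1+rs)\,(r+s,\,r-s,\,rs-1)$), and normalize to get $N = -\sign(F'(r)G'(s))\,\widetilde{N}$. Two minor remarks: the identity $1+rs = 2t/(t+1)$ from Theorem~\ref{rs domain} is never needed---the simplification is purely polynomial in $r,s$---and your tentative first component $-(y_r z_s - z_r y_s)$ has the wrong sign (it should be $+(y_r z_s - z_r y_s)$), which is exactly the bookkeeping slip you anticipated.
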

\begin{proof}
From the proof of Theorem \ref{conformal}, we have the partial derivatives of $x+y$, $x-y$ and $\phi$ wrt $r$ and $s$. So, $x_r=\frac{1}{2}\left((x+y)_r+(x-y)_r\right)=\frac{(1-r^2)F'(r)}{2}$ and similarly, $x_s=\frac{(1-s^2)G'(s)}{2}$, $y_r=-\frac{(r^2+1)F'(r)}{2}$ and $y_s=\frac{(s^2+1)G'(s)}{2}$. If the corresponding BI soliton surface is $X=(x,y,\phi)$, then
\begin{align*}
X_r\times_{\B^3} X_s=&\det\left(
\begin{matrix}
\hat{i} & -\hat{j} & \hat{k}\\
x_r & \phantom{-}y_r & \phi_r\\
x_s & \phantom{-}y_s & \phi_s
\end{matrix}
\right)\\
=&-\frac{F'(r) G'(s)(1+rs)}{2}\left((r+s), (r-s), (rs-1)\right).
\end{align*}
So, $\norm{X_r\times_{\B^3} X_s}=\sqrt{\left|\langle X_r\times_{\B^3} X_s, X_r\times_{\B^3} X_s\rangle\right|}=\frac{\left|F'(r)G'(s)\right| (1+rs)^2}{2}$. Thus, $N:=\frac{X_r\times X_s}{\norm{X_r\times X_s}}=-\sign(F'(r)G'(s))\left(\frac{r+s}{1+rs}, \frac{r-s}{1+rs}, \frac{rs-1}{1+rs}\right)=-\sign(F'(r)G'(s))\widetilde{N}$ away from non-regular points. This proves the theorem.
\end{proof}

\begin{note}\label{N in ab}
Upto a sign, $N$ is $\widetilde{N}$. In terms of conformal parameters $\alpha$ and $\beta$, $\widetilde{N}=\left(\frac{2\alpha}{1+\alpha^2-\beta^2},\frac{2\beta}{1+\alpha^2-\beta^2},\frac{\alpha^2-\beta^2-1}{1+\alpha^2-\beta^2}\right)$. Define a surface $U:={(x,y,z):x^2-y^2+z^2=1}$. It is a component of the unit sphere in $\B^3$ containing the point $\nu=(0,0,1)$ (the north pole). For a point $(\alpha,\beta)$, let $\sigma(\alpha,\beta)$ be the point, at which the line joining $(\alpha,\beta,0)$ and $\nu$ intersects $U$. Then $\widetilde{N}=\sigma(\alpha,\beta)$ and $(\alpha,\beta,0)$ is the stereographic projection of $\widetilde{N}$ in $\B^3$. Recall that analogous result holds for maximal and minimal surfaces.

\end{note}
We saw that for a regular parameter point $(r,s)$, $N=\pm \widetilde{N}$. Also note that as $|rs|<1$, $\frac{rs-1}{1+rs}<0$. So, the third component of $\widetilde{N}$ is always negative. Thus, by observing the third coordinate of $N$, we can determine whether $N=\widetilde{N}$ or $N=-\widetilde{N}$ and then we can find the corresponding parameter point $(r,s)$.

\begin{thm}\label{normal to point}
If the unit normal $N$ to a BI soliton surface at a regular parameter point $(r,s)$ is $(n_1,n_2,n_3)$, then $(r,s)=\left(\frac{n_1+n_2}{1-n_3},\frac{n_1-n_2}{1-n_3}\right)$, if $n_3<0$ and $(r,s)=\left(\frac{-(n_1+n_2)}{1+n_3},\frac{-(n_1-n_2)}{1+n_3}\right)$, if $n_3>0$.
\end{thm}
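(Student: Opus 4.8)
The plan is to invert the formula from Theorem~\ref{normal}, which expresses $\widetilde{N}=\left(\frac{r+s}{1+rs},\frac{r-s}{1+rs},\frac{rs-1}{1+rs}\right)$, and to use the sign analysis already carried out in the text to decide whether the given normal $N=(n_1,n_2,n_3)$ equals $\widetilde{N}$ or $-\widetilde{N}$. Since $|rs|<1$ at every parameter point of a BI soliton surface (Theorem~\ref{rs domain}), the third coordinate $\frac{rs-1}{1+rs}$ is strictly negative; hence if $n_3<0$ we must have $N=\widetilde{N}$, and if $n_3>0$ we must have $N=-\widetilde{N}$. (The case $n_3=0$ cannot occur.) This reduces the problem to solving $N=\widetilde{N}$ (resp. $N=-\widetilde{N}$) for $(r,s)$ in terms of $(n_1,n_2,n_3)$.

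For the case $n_3<0$, I would set $n_1=\frac{r+s}{1+rs}$, $n_2=\frac{r-s}{1+rs}$, $n_3=\frac{rs-1}{1+rs}$, and observe that $1-n_3 = 1 - \frac{rs-1}{1+rs} = \frac{2}{1+rs}$. Therefore $n_1+n_2 = \frac{2r}{1+rs}$ gives $\frac{n_1+n_2}{1-n_3} = r$, and similarly $n_1-n_2 = \frac{2s}{1+rs}$ gives $\frac{n_1-n_2}{1-n_3} = s$, which is exactly the claimed formula. For the case $n_3>0$, I would run the same computation with $\widetilde{N}$ replaced by $-\widetilde{N}$, i.e. with $n_i$ replaced by $-n_i$ in the coordinates of $\widetilde{N}$; then $1+n_3 = \frac{2}{1+rs}$ and one gets $r = \frac{-(n_1+n_2)}{1+n_3}$, $s = \frac{-(n_1-n_2)}{1+n_3}$. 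A small point to check is that $1+rs>0$ (again from $|rs|<1$), so division by $1-n_3$ or $1+n_3$ is legitimate and the denominators in the final formulas are nonzero.

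This argument is essentially a routine algebraic inversion, so there is no serious obstacle; the only thing requiring care is the dichotomy on the sign of $n_3$, which is where the constraint $|rs|<1$ from Theorem~\ref{rs domain} is genuinely used — without it one could not determine the sign ambiguity in $N=\pm\widetilde{N}$ from the data $(n_1,n_2,n_3)$ alone. I would also remark that consistency is automatic: a unit normal in $\B^3$ satisfies $n_1^2-n_2^2+n_3^2=1$, and substituting the recovered $(r,s)$ back into $\widetilde{N}$ reproduces $(n_1,n_2,n_3)$, so no spurious solutions arise.
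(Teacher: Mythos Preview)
Your proposal is correct and follows essentially the same approach as the paper: both use the sign of $n_3$ (via $|rs|<1$) to resolve the $\pm\widetilde{N}$ ambiguity, and then algebraically invert the formula for $\widetilde{N}$. The only cosmetic difference is that the paper first solves $n_3=\frac{rs-1}{1+rs}$ for $rs$ and then computes $1+rs=\frac{2}{1-n_3}$, whereas you compute $1-n_3=\frac{2}{1+rs}$ directly; the two are the same manipulation.
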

\begin{proof}
Consider the case $n_3<0$. Then $N=\widetilde{N}=\left(\frac{r+s}{1+rs}, \frac{r-s}{1+rs}, \frac{rs-1}{1+rs}\right)$ from Theorem \ref{normal}. So, $n_3=\frac{rs-1}{1+rs}$, that gives $rs=\frac{1+n_3}{1-n_3}$. Substituting this for the first two components, we get that $r+s=(1+rs) n_1=\frac{2n_1}{1-n_3}$ and $r-s=\frac{2n_2}{1-n_3}$, which give $r=\frac{n_1+n_2}{1-n_3}$ and $s=\frac{n_1-n_2}{1-n_3}$.\\
If $n_3>0$, then $N=-\widetilde{N}$. So, $r$ and $s$ will be given by above formulae with $-n_1$, $-n_2$ and $-n_3$ replacing $n_1$, $n_2$ and $n_3$ respectively.
\end{proof}

\begin{note}\label{alt repr}
As $n_1^2-n_2^2+n_3^2=1$, $(n_1+n_2)(n_1-n_2)=(1+n_3)(1-n_3)$. So, $\frac{n_1+n_2}{1-n_3}=\frac{1+n_3}{n_1-n_2}$ and $\frac{n_1-n_2}{1-n_3}=\frac{1+n_3}{n_1+n_2}$.

\end{note}

\section{Bj\"{o}rling Problem for BI Soliton Surface}
\subsection{Existence of Solution to Bj\"orling Problem}
Given a real analytic strip $\{(c(t),n(t)):t\in I\}$, Bj\"{o}rling Problem is to find a BI soliton surface, that contains the curve $c$ and whose unit normal at $c(t)$ is precisely $n(t)$. Here we will present two approaches for finding whether a Bj\"{o}rling problem has a solution or not.

\subsubsection*{Approach 1}
From Remark \ref{BI and TMS}, we know that $(x,y,\phi(x,y))$ will give a BI soliton surface in $\B^3$ if and only if $(\phi(x,y),x,y)$ gives a timelike minimal surface in $\Lo^3$. Thus, the BI soliton surface given by $(x,y,\phi(x,y))$ will solve the Bj\"{o}rling problem with curve $c=(c_1,c_2,c_3)$ and unit normal $n=(n_1,n_2,n_3)$ if and only if the minimal surface given by $(\phi(x,y),x,y)$ solves the Bj\"{o}rling problem with curve $c=(c_3,c_1,c_2)$ and unit normal $n=(n_3,n_1,n_2)$. This gives us the following theorem.

\begin{thm}\label{approach 1}
The Bj\"{o}rling problem for BI soliton surface with the curve $c=(c_1,c_2,c_3)$ and the unit normal $n=(n_1,n_2,n_3)$ has a solution if and only if there is a timelike minimal surface such that\\
(i) it solves the Bj\"{o}rling problem with curve $c=(c_3,c_1,c_2)$ and unit normal $n=(n_3,n_1,n_2)$ and\\
(ii) it can be written as a graph over $yz$ plane in some neighborhood of the curve $c$.
\end{thm}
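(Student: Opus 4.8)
The plan is to read the statement as a direct translation of Remark \ref{BI and TMS}, and to carry out that translation carefully while tracking the coordinate permutation $(p_1,p_2,p_3)\mapsto(p_3,p_1,p_2)$ that it uses. The key structural fact I would record first is that the renaming of the $x,y,z$ axes as $y,z,x$ is an isometry from $(\B^3,\langle\cdot,\cdot\rangle_{\B^3})$ to $(\Lo^3,\langle\cdot,\cdot\rangle_{\Lo^3})$ (this was already noted in Remark \ref{BI and TMS} via $\langle N_Y,N_Y\rangle_{\Lo^3}=\langle N_X,N_X\rangle_{\B^3}$). Consequently it carries a curve, a surface, and the unit normal along that surface to the image curve, image surface, and the unit normal of the image surface, with all three objects permuted by the same rule. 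Once this is in place, the theorem is essentially bookkeeping; no hard analysis is involved.

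For the forward direction I would start from a BI soliton surface $X(x,y)=(x,y,\phi(x,y))$ solving the Bj\"orling problem with strip $(c,n)=((c_1,c_2,c_3),(n_1,n_2,n_3))$. Since $X$ contains $c$, automatically $c_3(t)=\phi(c_1(t),c_2(t))$. By Remark \ref{BI and TMS}, $Y(x,y):=(\phi(x,y),x,y)$ is a timelike minimal surface; its image contains $(\phi(c_1(t),c_2(t)),c_1(t),c_2(t))=(c_3(t),c_1(t),c_2(t))$, and by the isometry above its unit normal along that curve is $(n_3,n_1,n_2)$, which is (i). And $Y$ is by construction the graph $\{(\phi(p,q),p,q)\}$ over its last two coordinates, i.e.\ a graph over the $yz$-plane on a neighborhood of the curve, which is (ii).

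For the converse I would take a timelike minimal surface $Y$ satisfying (i) and (ii), use (ii) to write $Y$ near the curve as $Y=\{(\psi(y,z),y,z)\}$, and then invoke the converse half of Remark \ref{BI and TMS} to conclude that $\psi$ is a BI soliton. Setting $X(x,y):=(x,y,\psi(x,y))$ gives a BI soliton surface; condition (i) forces $\psi(c_1(t),c_2(t))=c_3(t)$, so $X$ contains $c$, and the inverse isometry sends the normal $(n_3,n_1,n_2)$ of $Y$ back to $(n_1,n_2,n_3)=n$ along $c$. Hence $X$ solves the original Bj\"orling problem.

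The point that needs care — and the reason condition (ii) cannot be dropped — is that the timelike minimal surface solving (i) is produced (e.g.\ from the representation of \cite{kim}) only as an abstractly parametrized surface, and near the curve it need not be a graph over the $yz$-plane: it might be a graph only over the $xz$-plane, or a genuine mixture of the two, and then Remark \ref{BI and TMS}, which demands the specific form $(\psi(y,z),y,z)$, does not apply. So I would be explicit that (ii) is \emph{exactly} the hypothesis needed to invoke the converse of Remark \ref{BI and TMS}, and I would check the minor technical point that the domain on which $\psi$ is defined (the projection of $Y$'s parameter neighborhood to the $yz$-plane, which becomes the $xy$-plane of the resulting BI soliton surface) does contain a neighborhood of the curve. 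Everything else is the permutation $(1,2,3)\mapsto(3,1,2)$ applied consistently.
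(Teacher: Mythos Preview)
Your proposal is correct and follows essentially the same approach as the paper: the paper's own proof is simply ``The proof is evident from the above discussions,'' where those discussions are precisely the coordinate-permutation correspondence of Remark~\ref{BI and TMS} that you have unpacked. Your version is considerably more explicit---in particular, your justification of why condition (ii) is exactly what is needed to invoke the converse half of Remark~\ref{BI and TMS} is a genuine addition of clarity over the paper's treatment---but the underlying idea is identical.
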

The proof is evident from the above discussions.  In \cite{cha}, it was shown that the Bj\"{o}rling problem for timelike minimal surface with $(\widetilde{c},\widetilde{n})$ has a solution if $\widetilde{c}$ is a spacelike or timelike curve. So, if ${c'_1}^2-{c'_2}^2+{c'_3}^2>0$ or ${c'_1}^2-{c'_2}^2+{c'_3}^2<0$, there exists a timelike minimal surface satisfying the first condition in Theorem \ref{approach 1}. For completeness, we mention that result here. 

\begin{thm}[Chaves, Dussan \& Magid]
The Bj\"{o}rling problem for timelike minimal surface with the spacelike or timelike curve $\widetilde{c}$ and unit normal $\widetilde{n}$ is given by
\begin{equation}\label{tm}
 X(z)= \emph{Re} \left(\widetilde{c}(w)+k'\int_{s_0}^{w} \widetilde{n}(\xi)\times \widetilde{c}'(\xi) d\xi \right)
\end{equation}
where $z=t+k's$, $w=z$ if $\widetilde{c}$ is timelike and $w=k'z=s+k't$ if $\widetilde{c}$ is spacelike and $t_0$ is any point on $I$. [$k'$ is the analogue of $i$ in split-complex number system with $k'^2=1$.]
\end{thm}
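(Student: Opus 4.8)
The plan is to run the classical Björling construction in the split-complex setting, exactly parallel to the minimal-surface case but with $i$ replaced by $k'$. The starting point is the conformal-coordinate characterization of timelike minimal surfaces already invoked in Remark~\ref{TMS from BI}: a map $X$ from a domain of the $z=t+k's$ plane into $\Lo^3$ parametrizes a timelike minimal surface in conformal parameters iff it is Lorentz-harmonic, $X_{tt}-X_{ss}=0$, and conformal, $\langle X_t,X_t\rangle_{\Lo^3}=-\langle X_s,X_s\rangle_{\Lo^3}\neq 0$ with $\langle X_t,X_s\rangle_{\Lo^3}=0$. Harmonicity is equivalent to $X=\mathrm{Re}\,\Phi$ for a split-holomorphic $\Phi$ (one with $\Phi_s=k'\Phi_t$), and writing $\Phi'=a+k'b$ with $a,b$ real gives $X_t=a$, $X_s=b$, so conformality reduces to the single null condition $\langle\Phi',\Phi'\rangle_{\Lo^3}=0$ for the split-bilinear extension of the metric. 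Thus the whole problem is to reconstruct the right split-holomorphic $\Phi'$ from the Björling data.

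First I would carry out that reconstruction on the axis $s=0$. There one needs $X=\widetilde c$, hence $X_t=\widetilde c'$; the tangent plane along $\widetilde c$ contains $\widetilde c'$ and is orthogonal to $\widetilde n$, and since $\langle\widetilde n,\widetilde c'\rangle_{\Lo^3}=0$ (a compatibility condition of the strip) and $\langle\widetilde n,\widetilde n\times\widetilde c'\rangle_{\Lo^3}=\det[\widetilde n\ \widetilde c'\ \widetilde n]=0$, the vector $\widetilde n\times\widetilde c'$ is the forced choice for $X_s$ on the axis, the conformal normalization fixing the scalar. Hence $\Phi'=\widetilde c'+k'(\widetilde n\times\widetilde c')$ on $I$; as $\widetilde c,\widetilde n$ are real-analytic this extends uniquely to a split-holomorphic $\Phi'$ on a neighbourhood of $I$, and integrating from a basepoint and absorbing the constant into $\widetilde c$ yields exactly \eqref{tm} with $w=z$.

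The verification is then a chain of direct checks. On $s=0$: $\mathrm{Re}(\widetilde c+k'(\cdots))=\widetilde c$, $\mathrm{Re}\,\Phi'=\widetilde c'$, and $\mathrm{Re}(k'\Phi')=\widetilde n\times\widetilde c'$, so the strip is recovered and the unit normal along $\widetilde c$ is $\pm\widetilde n$; $X_{tt}-X_{ss}=0$ is automatic from split-holomorphicity; and $\langle\Phi',\Phi'\rangle_{\Lo^3}=\langle\widetilde c',\widetilde c'\rangle_{\Lo^3}+\langle\widetilde n\times\widetilde c',\widetilde n\times\widetilde c'\rangle_{\Lo^3}+2k'\langle\widetilde c',\widetilde n\times\widetilde c'\rangle_{\Lo^3}$, which vanishes on $I$ by $\langle\widetilde c',\widetilde n\times\widetilde c'\rangle_{\Lo^3}=0$ and the Lorentzian identity $\langle u\times v,u\times v\rangle_{\Lo^3}=\langle u,v\rangle_{\Lo^3}^2-\langle u,u\rangle_{\Lo^3}\langle v,v\rangle_{\Lo^3}$ together with $\langle\widetilde n,\widetilde n\rangle_{\Lo^3}=1$; since everything in sight is real-analytic, $\langle\Phi',\Phi'\rangle_{\Lo^3}\equiv 0$ propagates off the axis, giving conformality. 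Finally $\langle X_t,X_t\rangle_{\Lo^3}=\langle\widetilde c',\widetilde c'\rangle_{\Lo^3}\neq 0$ on $I$ by the spacelike/timelike hypothesis, so near the axis the induced metric has determinant $-\langle X_t,X_t\rangle_{\Lo^3}^2<0$, i.e.\ $X$ is a regular timelike immersion. For spacelike $\widetilde c$ I would not rerun the argument: in conformal coordinates a timelike surface has one spacelike and one timelike family of parameter curves, so a spacelike prescribed curve belongs on the $s$-lines rather than the $t$-lines, which is precisely what the substitution $w=k'z=s+k't$ implements, and the identical computation then gives the stated formula. Local uniqueness follows because any conformal solution has its $\Phi'$ pinned on $I$ by $(\widetilde c,\widetilde n)$ exactly as above and a real-analytic curve has a unique split-holomorphic germ — though I would phrase this with care, since the gap between uniqueness of the conformal solution and uniqueness of the surface is exactly what the later sections exploit.

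I expect the main obstacle to be twofold. The first is the sign bookkeeping that makes the null condition close: the cancellation $\langle\widetilde c',\widetilde c'\rangle_{\Lo^3}+\langle\widetilde n\times\widetilde c',\widetilde n\times\widetilde c'\rangle_{\Lo^3}=0$ relies on $\langle\widetilde n,\widetilde n\rangle_{\Lo^3}=+1$ — if $\widetilde n$ were timelike a sign would flip and $X$ would fail to be timelike — so the conventions for $\langle\cdot,\cdot\rangle_{\Lo^3}$ and for the induced cross product must be tracked exactly. The second is justifying, rather than merely asserting, the dichotomy behind $w=z$ versus $w=k'z$: one has to see that the wrong choice places $\widetilde c$ along a characteristic (null) direction of $\partial_t^2-\partial_s^2$, where the Cauchy problem is ill-posed and the representation degenerates, so the spacelike/timelike split is forced and not cosmetic.
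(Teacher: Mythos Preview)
The paper does not prove this theorem at all: it is quoted verbatim from \cite{cha} (``For completeness, we mention that result here''), so there is no in-paper proof to compare against. Your argument is the standard one --- split-complex Weierstrass data built from the strip, the null condition $\langle\Phi',\Phi'\rangle_{\Lo^3}=0$ checked via the Lorentzian cross-product identity, and propagation off the axis by real-analyticity --- and it is essentially how Chaves--Dussan--Magid proceed in the cited reference, so your sketch is correct and aligned with the original source even though the present paper contributes nothing here.

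One small caution: when you write that ``$\langle\Phi',\Phi'\rangle_{\Lo^3}\equiv 0$ propagates off the axis'' by real-analyticity, be explicit that you are using the power-series extension of the real-analytic data, not a split-holomorphic identity theorem. Split-holomorphic functions do \emph{not} satisfy an identity principle (the null divisors $1\pm k'$ kill it), which is precisely the mechanism behind the non-uniqueness in Theorem~\ref{non-unique}; the vanishing propagates only because $\Phi'$ is, by construction, the split-complex extension of a single real-analytic germ, so $\langle\Phi',\Phi'\rangle_{\Lo^3}$ is the extension of the identically zero function on $I$. You gesture at this distinction in your last sentence, but it deserves to be stated at the point where you invoke it.
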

In \cite{gale}, Gale and Nikaid\^{o} gave some sufficient conditions for global invertibility of a differentiable map. 

For example, if the map $(t,s)\mapsto X(t,s)=(X_1(t,s),X_2(t,s),X_3(t,s))$ defines a surface, it can be written in the form $(\phi(x,y),x,y)$ if the Jacobian of the map $(t,s)\mapsto (X_2(t,s),X_3(t,s))$ is positive (or negative) quasi-definite on a convex domain. However, this condition looks quite restrictive. If the above Jacobian is positive (or negative) quasi-definite at $(t,0)$ for $t\in I$ (a closed interval) (or at the points $(0,s)$ for $s$ in a closed interval $I$), then for a $C^2$ map $X$, it will imply that the Jacobian is positive (resp. negative) quasi-definite on a convex domain containing that closed interval as we will see in proof of the next theorem.\\
There are other sufficient conditions of global invertibility in \cite{gale} and those can also be used analogously. We use some of the results from Theorem 7 of that paper to get Theorem \ref{existence}. First we mention the theorem from $\cite{gale}$.

\begin{thm}[Gale \& Nikaid\^{o}]\label{univalence}
(i) Suppose $\Omega$ is an arbitrary rectangular region (either closed or non-closed) and $F$ has continuous partial derivatives. If no principal minors of the Jacobian vanish, then $F$ is injective.\\
(ii) Suppose that $\Omega$ is an open rectangular region and $F$ has continuous partial derivatives. If the Jacobian does not vanish and no diagonal entries of the Jacobian matrix change signs, then $F$ is injective. 
\end{thm}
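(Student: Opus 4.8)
\medskip
\noindent\textbf{Proof proposal.} Since this is the classical univalence theorem of Gale and Nikaid\^{o} and the complete argument (which is not short) is the content of \cite{gale}, I will only outline the plan one would follow, with emphasis on the two-dimensional case used in the sequel. The natural first move is the contrapositive via the mean value theorem: a rectangular region is convex, so if $F(a)=F(b)$ with $a\neq b$, then parametrizing the segment by $\gamma(t)=a+t(b-a)$ we get
\[
0=F(b)-F(a)=\left(\int_0^1 DF(\gamma(t))\,dt\right)(b-a),
\]
so the averaged Jacobian $\overline{DF}:=\int_0^1 DF(\gamma(t))\,dt$ is singular. If $DF$ were positive (or negative) quasi-definite, $\overline{DF}$ would inherit that property and hence be invertible, contradicting $\overline{DF}(b-a)=0$ in one line; but the hypotheses of Theorem \ref{univalence} are strictly weaker (the class of matrices with all principal minors nonvanishing, or with nonvanishing determinant and fixed-sign diagonal, is not convex), so this short route fails and more work is needed.

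For part (i) the plan is Gale and Nikaid\^{o}'s induction on the dimension $n$. For $n=1$, ``no principal minor vanishes'' just says $F'\neq0$, so $F$ is strictly monotone on the interval, hence injective. For the inductive step, pick a nonvanishing $1\times1$ principal minor, say $\partial_1 F_1\neq0$, and straighten the first coordinate: $\Phi(x):=(F_1(x),x_2,\dots,x_n)$ is a $C^1$ map with $\det D\Phi=\partial_1F_1\neq0$, so it is a diffeomorphism onto its image, and $H:=F\circ\Phi^{-1}$ has the form $H(\xi,x')=(\xi,H_2(\xi,x'),\dots,H_n(\xi,x'))$. A direct computation identifies the Jacobian of $x'\mapsto(H_2,\dots,H_n)$ (for frozen $\xi$) with the Schur complement of $\partial_1F_1$ in $DF$; its principal minors are precisely those principal minors of $DF$ that contain the index $1$, divided by $\partial_1F_1$, hence again nonvanishing. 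One then applies the inductive hypothesis to this $(n-1)$-variable map to get injectivity in $x'$, and since $H_1=\xi$ separates the $\xi$-fibres, $H$, and therefore $F$, is injective. The step I expect to be the genuine obstacle is making this induction rigorous at the level of \emph{domains}: after straightening, $\Phi(\Omega)$ and its $\xi$-slices need not be rectangular (the slices need not even be connected), so keeping within the scope of the inductive hypothesis — and, at bottom, a Brouwer-degree argument controlling boundary behaviour — is the delicate part, and there I would simply invoke \cite{gale}.

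For part (ii), where the diagonal entries are only assumed not to change sign (so they may vanish) but $\det DF$ does not vanish, the plan is to perturb. First reduce to the case of a compact subrectangle (injectivity of $F$ on every compact subrectangle implies injectivity on $\Omega$). After replacing some $F_i$'s by $-F_i$ one may assume the diagonal partials are $\geq0$; then, for $\varepsilon>0$, the map $F_\varepsilon:=F+\varepsilon\,\mathrm{id}$ has diagonal partials $\geq\varepsilon>0$, and $\det DF_\varepsilon=\det DF+\varepsilon\,\mathrm{tr}(DF)+O(\varepsilon^2)$ stays nonvanishing on the compact rectangle for $\varepsilon$ small (using $\mathrm{tr}(DF)\geq0$ and the fixed sign of $\det DF$), so $F_\varepsilon$ is injective by part (i). It remains to let $\varepsilon\to0$: here $\det DF\neq0$ is used a second time, as it makes $F$ a local diffeomorphism, so a hypothetical double point $F(a)=F(b)$ with $a\neq b$ would produce two disjoint open sets on which $F_\varepsilon$, for small $\varepsilon$, must each cover a fixed neighbourhood of $F(a)$ (a degree computation), contradicting injectivity of $F_\varepsilon$. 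Thus the main obstacle in (ii) is exactly this limiting step, and — as in (i) — it is the topological input, not any computation, that carries the weight.
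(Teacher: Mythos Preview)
The paper does not prove this theorem at all: it is stated as a quotation of Theorem~7 of \cite{gale} and used as a black box in the proof of Theorem~\ref{existence}. So there is no ``paper's own proof'' to compare your sketch against; your outline already goes well beyond what the paper supplies.

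As a sketch of Gale--Nikaid\^{o}, your part~(i) captures the right inductive architecture (base case monotone, inductive step via a Schur-complement reduction), and you correctly flag the real difficulty: after straightening, the image domain and its slices are no longer rectangular, so one cannot simply re-apply the inductive hypothesis --- this is exactly where the original paper invokes a more delicate argument. Your part~(ii) perturbation idea is plausible but the line ``$\det DF_\varepsilon=\det DF+\varepsilon\,\mathrm{tr}(DF)+O(\varepsilon^2)$ stays nonvanishing'' is not justified as written: if $\det DF<0$ on the rectangle, adding a nonnegative $\varepsilon\,\mathrm{tr}(DF)$ could push the determinant through zero, and nothing in the hypotheses fixes the sign of $\det DF$ to be positive. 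Since you ultimately defer to \cite{gale} anyway, and since the paper itself does exactly that, the cleanest fix is simply to cite the result, as the paper does.
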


\begin{note}\label{invertibility}
By Inverse Function Theorem, if a $C^k$ function $f$ is injective and if its Jacobian does not vanish, then $f$ will be invertible from its range and the inverse will be in $C^k$.
\end{note}

\begin{thm}\label{existence}
The Björling problem for BI soliton surface with the curve $c =
(c_1, c_2, c_3)$ (with ${c'_1}^2-{c'_2}^2+{c'_3}^2\neq 0$) and the unit normal $n = (n_1, n_2, n_3)$ defined on a closed interval $I$ has a solution if the Jacobian matrix $J$ of the map $(t,s)\mapsto (X_2,X_3)$ at the points $(t,0)$ (if   ${c'_1}^2-{c'_2}^2+{c'_3}^2<0$) or $(0,s)$ (if  ${c'_1}^2-{c'_2}^2+{c'_3}^2>0$) satisfies any of the following two criteria:\\
(i) no principal minors of $J$ vanish,\\
(ii) $\det(J)$ does not vanish and the diagonal entries of $J$ does not change sign,\\
where $X(z)= \text{Re} \left(\widetilde{c}(w)+k'\int_{s_0}^{w} \widetilde{n}(\xi)\times \widetilde{c}'(\xi) d\xi \right)$ (as given by \eqref{tm}) is the solution of the Björling problem for timelike minimal surface with $\widetilde{c}=(c_3,c_1,c_2)$ and $\widetilde{n}=(n_3,n_1,n_2)$.

\begin{proof}
Suppose \eqref{tm} defines a timelike minimal surface on a domain $\Omega$ (a domain, where the split-holomorphic extensions of $\widetilde{c}$ and $\widetilde{n}$ exist). If the Jacobian $J$ of the map $(t,s)\mapsto (X_2,X_3)$ satisfies $(i)$ or $(ii)$ at the points on $I$ (identified with $I\times {0}$ or ${0}\times I$ accordingly), then from continuity of $J$, $(i)$ or $(ii)$ (resp.) will hold in a neighborhood of those points as well. For each $u\in I$, consider the largest open ball $B(u,r_u)\subseteq \Omega$ around $u$, on which $(i)$ or $(ii)$ (resp.) holds. (Take all the open balls centered at $u$, for which $(i)$ or $(ii)$ (resp.) holds and take their union, which will again be an open ball.) Now, as $r_u$ will be a continuous function\footnote{If a continuous function $f$ is positive or negative in $B(u,r)$ centered at $u$, then for $\norm{v-u}<\epsilon$, $f$ has the same sign in $B(v,r-\epsilon)\subseteq B(u,r)$ (follows from triangle inequality). So, for $\norm{v-u}<\epsilon$, $r_v\geq r_u-\epsilon$ and similarly, $r_u\geq r_v-\epsilon$. So, $r_u$ is a continuous function of $u$.} of $u$ and as $I$ is compact, there is a minimum value of $r_u$. Call this $R$. Then $(i)$ or $(ii)$ (resp.) will hold in $B(u,R)$ for all $u\in I$. Take $V=\cup_{u\in I} {B(u,R)}$. Consider the rectangle $R=(a-R/2,b+R/2)\times (-R/2,R/2)$, if ${c'_1}^2-{c'_2}^2+{c'_3}^2<0$ or $R=(-R/2,R/2)\times (a-R/2,b+R/2)$, if ${c'_1}^2-{c'_2}^2+{c'_3}^2>0$, where $I=[a,b]$. $R$ is an open rectangle containing $I$ and contained in $V$. So, $(i)$ or $(ii)$ (resp.) holds in $R$. From Theorem 7 of \cite{gale}, the map $(t,s)\mapsto (X_2,X_3)$ defined on $R$ is invertible. Hence, both the conditions of Theorem \ref{approach 1} are satisfied and the result follows.
\end{proof}

\end{thm}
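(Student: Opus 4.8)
The plan is to reduce the Björling problem for the BI soliton surface to the Björling problem for a timelike minimal surface via the coordinate renaming from Remark \ref{BI and TMS}, and then to verify that the resulting timelike minimal surface is, near the prescribed curve, a graph over the $yz$-plane — which is exactly what Theorem \ref{approach 1} demands. The first step is immediate: set $\widetilde{c} = (c_3, c_1, c_2)$ and $\widetilde{n} = (n_3, n_1, n_2)$. Since $\langle \widetilde{c}', \widetilde{c}'\rangle_{\Lo^3} = {c'_3}^2 + {c'_1}^2 - {c'_2}^2 = {c'_1}^2 - {c'_2}^2 + {c'_3}^2 \neq 0$ by hypothesis, the curve $\widetilde{c}$ is either spacelike or timelike, so by the Chaves--Dussan--Magid theorem formula \eqref{tm} produces a timelike minimal surface $X(z) = \mathrm{Re}\left(\widetilde{c}(w) + k'\int_{s_0}^w \widetilde{n}(\xi)\times\widetilde{c}'(\xi)\,d\xi\right)$ solving the Björling problem for $(\widetilde{c}, \widetilde{n})$ on some domain $\Omega$ where the split-holomorphic extensions exist and which contains $I$ (identified with $I\times\{0\}$ in the timelike case, where $z = t + k's$ and $w=z$, or with $\{0\}\times I$ in the spacelike case, where $w = k'z$). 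This handles condition (i) of Theorem \ref{approach 1}.

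The substance of the proof is condition (ii): showing that $X$ can be written as a graph over the $yz$-plane — i.e. that the map $(t,s)\mapsto (X_2(t,s), X_3(t,s))$ is invertible — in a neighborhood of the curve. I would first observe that the hypothesis gives us the relevant invertibility criterion, (i) or (ii) of Theorem \ref{univalence}, only \emph{on the curve} (at the points $(t,0)$ or $(0,s)$). The task is to propagate this to an open rectangle containing $I$. Since $X$ is $C^2$ (indeed real-analytic) on $\Omega$, the Jacobian $J$ and its principal minors (or diagonal entries and $\det J$) are continuous, so wherever a principal minor is nonzero at a curve point it stays nonzero — and a diagonal entry keeps its sign — on a small ball around that point. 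For each $u\in I$ take the largest open ball $B(u, r_u)\subseteq\Omega$ on which the chosen criterion holds; this is well-defined (union of all such balls centered at $u$).

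The key technical point is that $r_u$ is a \emph{continuous}, hence — by compactness of $I$ — bounded-below function of $u$; call the positive infimum $R$. The continuity follows from a triangle-inequality argument: if a continuous function keeps its sign on $B(u,r)$, it keeps the same sign on $B(v, r - \|v - u\|)$, so $|r_u - r_v| \le \|u - v\|$. Then criterion (i) or (ii) holds on $B(u, R)$ for every $u\in I$, hence on the open rectangle $(a - R/2, b + R/2)\times(-R/2, R/2)$ (timelike case) or $(-R/2, R/2)\times(a - R/2, b + R/2)$ (spacelike case) — an open rectangular region containing $I = [a,b]$ and contained in $\bigcup_{u\in I} B(u,R)$. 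Applying Theorem \ref{univalence} on this rectangle makes $(t,s)\mapsto (X_2, X_3)$ injective there, and since its Jacobian does not vanish (forced by either criterion), Note \ref{invertibility} (Inverse Function Theorem) upgrades injectivity to a $C^2$ inverse. Thus $X$ is a graph over its image in the $yz$-plane near $c$, condition (ii) of Theorem \ref{approach 1} holds, and the BI soliton surface solving the original Björling problem exists.

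I expect the main obstacle — really the only nonroutine point — to be the passage from a pointwise-on-the-curve hypothesis to an open-neighborhood statement, i.e. producing the uniform radius $R$; the continuity of $u\mapsto r_u$ together with compactness of $I$ is what makes this go through, and this is where care is needed to ensure the resulting rectangle is genuinely rectangular (as required by the Gale--Nikaidô theorem) rather than merely open.
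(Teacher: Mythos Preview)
Your proposal is correct and follows essentially the same route as the paper's proof: reduce to the timelike minimal Bj\"orling problem via Theorem \ref{approach 1}, use continuity of the Jacobian data together with the largest-ball radius $r_u$, the triangle-inequality argument for continuity of $u\mapsto r_u$, and compactness of $I$ to obtain a uniform $R$, then apply Gale--Nikaid\^o on the resulting open rectangle. Your write-up is in fact slightly more explicit than the paper's in invoking Note \ref{invertibility} to upgrade injectivity to a $C^2$ inverse.
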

In \cite{dev}, Sreedev M worked on this approach ($i.e.$ using the correspondence between BI solitons and time-like minimal surfaces and some sufficient conditions for global invertibility) to get some sufficient conditions for existence of a solution to Bj\"{o}rling problem for BI soliton surfaces.
We will now consider another approach for finding solution to the Björling problem.

\subsubsection*{Approach 2}
From Theorem \ref{normal to point}, given $\{(c(t),n(t)):t\in I\}$, assuming there exists a generalized BI soliton surface $X$ that solves the Björling problem for $c$ and $n$, we can find the parameter point $(r(t),s(t))$ in $rs$ plane, where the normal to the surface is $n(t)$. Then we can try to find the functions $F$ and $G$ of the general solution given by Barbashov and Chernikov, for which $X(r(t),s(t))=c(t)$. There may not exist any function $F$ and $G$, for which this happens. Even if such $F$ and $G$ exist, it may not be possible to write the surface given by as a graph over $xy$ plane.\\  
We want to find a generalized BI soliton surface that solves the Björling problem for $c$ and $\hat{n}$. Assume $\hat{n}_3\neq 0$ on $I$. If ${\hat{n}_3}<0$, take $n=\hat{n}$, otherwise take $n=-\hat{n}$. Then from Theorem \ref{normal to point}, we get $(r(t),s(t))=\left(\frac{n_1(t)+n_2(t)}{1-n_3(t)},\frac{n_1(t)-n_2(t)}{1-n_3(t)}\right)$. From $\eqref{xg}$, $\eqref{yg}$ and $\eqref{pg}$, substituting $(x,y,z)=c$ for $(r,s)=(r(t),s(t))$ and differentiating wrt $t$ we get --\\
\begin{align}
    &c'_1(t)-c'_2(t)=F'(r(t))r'(t)-s^2(t) G'(s(t))s'(t)\label{cx}\\
    &c'_1(t)+c'_2(t)=G'(s(t))s'(t)-r^2(t) F'(r(t))r'(t)\label{cy}\\
    &c'_3(t)=r(t) F'(r(t))r'(t)+ s(t) G'(s(t))s'(t).\label{cp}
\end{align}
Note that if the above three equations are satisfied for some $F$ and $G$, then they will define a surface given by $\eqref{x}$, $\eqref{y}$ and $\eqref{p}$, that will solve the Björling problem (with some fixed constants of integration). \\
From \eqref{cx} and \eqref{cy}, we get --
\begin{align}
    &(c'_1(t)+c'_2(t))+r^2(t)(c'_1(t)-c'_2(t))=(1-r^2(t)s^2(t))G'(s(t))s'(t)\label{cG}\\
    &s^2(t)(c'_1(t)+c'_2(t))+(c'_1(t)-c'_2(t))=(1-r^2(t)s^2(t)) F'(r(t))r'(t)\label{cF}
\end{align}
With some calculations, it can be shown that as $n$ is orthogonal to $c'$, \eqref{cG} and \eqref{cF} are consistent with \eqref{cp}. Also, note that from \eqref{cG} and \eqref{cF}, we can find $F(r(t))$ and $G(s(t))$ by integrating $F'(r(t))r'(t)$ and $G'(s(t))s'(t)$ wrt $t$ respectively:
\begin{align}
    &F(r(t))= \int \frac{s^2(t)(c'_1(t)+c'_2(t))+(c'_1(t)-c'_2(t))}{(1-r^2(t)s^2(t))} dt\label{F final}\\
    &G(s(t))=\int \frac{(c'_1(t)+c'_2(t))+r^2(t)(c'_1(t)-c'_2(t))}{(1-r^2(t)s^2(t))} dt\label{G final}
\end{align}
As $n_3<0$ on $I$, $1-n_3\neq 0$ on $I$. So, the map $t\mapsto (r(t),s(t))$ is a continuous map. So, $I_1=r(I)$ and $I_2=s(I)$ are intervals. Thus, we know the value of $F$ and $G$ on $I_1$ and $I_2$ respectively and using these values, we can construct the surface given by \eqref{x}, \eqref{y} and \eqref{p}, defined on the set $\Omega=(I_1\times I_2) \cap \{(r,s): |rs|< 1\}$ in $rs$ plane. We may also be able to extend $F$ and $G$ to larger domains, however that may not always be the case.\\
If $I$ is a closed interval and $n_1 n'_2-n_2 n'_1\neq \pm n'_3$ on $I$, then we can extend $F$ and $G$. In this case, $r(I)$ and $s(I)$ are compact, hence closed intervals. As $n_3\neq 0$ on $I$, $(1-r^2(t)s^2(t))=-\frac{4n_3}{(1-n_3)^2}\neq 0$. Moreover, the condition $n_1 n'_2-n_2 n'_1\neq \pm n'_3$ assures that $r'(t),s'(t)\neq 0$. Hence, from \eqref{cG} and \eqref{cF}, we deduce that $F'$ and $G'$ are continuous functions on closed intervals $r(I)$ and $s(I)$. So, from Tietze Extension Theorem, we can continuously extend $F'$ and $G'$ to whole of $\mathbb{R}$. Then we can get globally defined continuously differentiable functions $F$ and $G$.

\begin{thm}
Suppose $\{(c(t),n(t)):t\in I\}$ be the data for the Björling problem.  Then there exists a generalized BI soliton surface, for which the Barbashov-Chernikov functions $F$ and $G$ are given by (\ref{F final}) and (\ref{G final}) for $t\in I$.  
\end{thm}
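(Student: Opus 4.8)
The plan is to assemble into a proof the construction already carried out in the discussion preceding the statement; the only genuine content is to verify that the object produced really is a \emph{generalized} BI soliton surface in the sense of Section 2.3 and that it carries the prescribed strip. First I would normalize the orientation: assuming $\hat n_3\neq 0$ on $I$, set $n=\hat n$ if $\hat n_3<0$ on $I$ and $n=-\hat n$ otherwise, so that $n_3<0$ throughout $I$. By Theorem~\ref{normal to point} this forces the parameter curve to be $(r(t),s(t))=\left(\frac{n_1(t)+n_2(t)}{1-n_3(t)},\frac{n_1(t)-n_2(t)}{1-n_3(t)}\right)$; since the strip is real analytic and $1-n_3$ never vanishes on $I$, the maps $t\mapsto r(t)$ and $t\mapsto s(t)$ are real analytic, and the computation recorded after Theorem~\ref{rs domain} shows $|r(t)s(t)|<1$. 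Hence $I_1:=r(I)$ and $I_2:=s(I)$ are (sub)intervals and the parameter curve lies in $\Omega:=(I_1\times I_2)\cap\{|rs|<1\}$.

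Next I would treat \eqref{cx}--\eqref{cp} as the defining relations, these being exactly the conditions that a surface of the form \eqref{x}, \eqref{y}, \eqref{p} pass through $c$ at $(r(t),s(t))$. Taking the combinations $\eqref{cy}+r^2\eqref{cx}$ and $\eqref{cx}+s^2\eqref{cy}$ and dividing by $1-r^2s^2\neq 0$ gives \eqref{cG} and \eqref{cF}, and integrating in $t$ yields the values of $F\circ r$ and $G\circ s$ displayed in \eqref{F final} and \eqref{G final}; the remaining relation \eqref{cp} is then automatic, since $\langle c',n\rangle_{\B^3}=0$ together with $\widetilde N\perp X_r,X_s$ and $\tfrac{rs-1}{1+rs}\neq 0$ forces $c'_3-(z_rr'+z_ss')=0$. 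I would therefore \emph{define} $F$ on $I_1$ and $G$ on $I_2$ by \eqref{F final}, \eqref{G final}, observe they are $C^1$ (indeed real analytic) there, extend them to $C^2$ functions on $\R$ as in the paragraph preceding the statement (or simply restrict to $\Omega$), and let $X=(x,y,z)$ be given by \eqref{xg}, \eqref{yg}, \eqref{pg}. By Section 2.3 this $X$ is a generalized BI soliton surface; by construction $\frac{d}{dt}X(r(t),s(t))=c'(t)$, so after fixing the constants of integration $X(r(t),s(t))=c(t)$; and Theorem~\ref{normal} shows the unit normal along this curve is $\pm\widetilde N(r(t),s(t))=\pm n(t)=\hat n(t)$ once the orientation is chosen. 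Thus $X$ solves the Björling problem with Barbashov--Chernikov data \eqref{F final}, \eqref{G final} on $I$, as claimed.

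The step that needs the most care — and where the statement is only as strong as the underlying hypotheses permit — is ``define $F$ on $I_1$ by \eqref{F final}'': the right-hand side of \eqref{F final} is a priori a function of $t$, and for it to descend to a well-defined function of $r$ one needs $t\mapsto r(t)$ to be injective on $I$, and likewise for $s$. This is not automatic from the Björling data alone. It does hold, for instance, whenever $r'$ and $s'$ never vanish, i.e.\ under the nondegeneracy condition $n_1n_2'-n_2n_1'\neq\pm n_3'$ on $I$ flagged earlier, in which case $r$ and $s$ are strictly monotone and $F,G$ are genuine functions on the closed intervals $r(I)$, $s(I)$. So in writing up the proof I would either adopt that nondegeneracy hypothesis (matching the extension discussion above) or restrict $I$ to a subinterval on which $r$ and $s$ are monotone; on such an $I$ every step above goes through verbatim, and the theorem follows.
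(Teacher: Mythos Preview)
Your proposal is correct and follows essentially the same approach as the paper: the theorem has no separate proof block, and the preceding discussion (normalizing $n$, reading off $(r(t),s(t))$ from Theorem~\ref{normal to point}, solving \eqref{cx}--\eqref{cy} for $F'(r)r'$ and $G'(s)s'$, integrating to \eqref{F final}--\eqref{G final}, and checking consistency with \eqref{cp} via $\langle c',n\rangle_{\B^3}=0$) \emph{is} the intended proof. You have faithfully reconstructed it, and you have also correctly flagged the well-definedness issue for $F$ as a function of $r$ (and $G$ of $s$), which the paper handles only in the paragraph on the nondegeneracy condition $n_1n_2'-n_2n_1'\neq\pm n_3'$ without building it into the theorem's hypotheses.
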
 

In the next subsection we discuss non-uniqueness of the BI soliton.

\subsection{Non-uniqueness of Solution to the Björling\\ Problem for TMS \& BI Soliton Surface}
Let $x$, $y$ and $\phi$ be given by $\eqref{x}$, $\eqref{y}$ and $\eqref{p}$ for some $C^2$ functions $F$ and $G$. From Remark \ref{TMS from BI}, $Y:=(\phi, x, y)$ defines a timelike minimal surface away from non-regular points. Take a domain $\Omega$ in $\alpha \beta$ plane ($\alpha$, $\beta$ are conformal parameters defined previously) such that $\langle Y_{\alpha}, Y_{\alpha} \rangle_{\Lo^3}$ does not change sign. So, $Y:\Omega \to \Lo^3$ is a regular timelike minimal surface. Without loss of generality, $\langle Y_{\alpha}, Y_{\alpha} \rangle_{\Lo^3}>0$ (otherwise, take $-F$ in place of $F$). Let $N$ be the Gauss map of $Y$. Take $c(\alpha)=X(\alpha,0)$ and $n(\alpha)=N(\alpha,0)$ for $\alpha \in I$, where $I=[a,b]$ is a non-trivial closed interval such that $I\times \{0\}$ is contained in $\Omega$. Note that $Y$ solves the Björling Problem for timelike minimal surface with $c$ and $n$. Also, note that $(\alpha,\beta)=(\alpha_0,0)\iff r=s=\alpha_0$. So, $(\alpha,\beta)\in I\times \{0\}$ implies that $a\leq r,s\leq b$. Now, take an open interval $J=(c,d)$ disjoint from a neighborhood $I_0$ of $I$ such that $(I_0\cup J)\times \{0\}$ is compactly contained in $\Omega$. Take a smooth non-negative bump function $f$, compactly supported in $J$ such that $f(\frac{c+d}{2})>0$ and $|f'|\leq\frac{|F'|}{2}$ on $J$. Consider the surface $\widetilde{Y}$ on $\Omega$ given by $\eqref{x}$, $\eqref{y}$ and $\eqref{p}$ with $\widetilde{F}:=F+f$ and $\widetilde{G}:=G$ choosing the constants of integration in those equations in such a way that $Y$ and $\widetilde{Y}$ agrees at $(\alpha,\beta)=(a,0)$. Then $\widetilde{Y}$ is also a regular surface. As $F=\widetilde{F}$ and $G=\widetilde{G}$ in $I_0$ and for a small enough neighborhood of $I\times \{0\}$ in $\Omega$, both $r$ and $s$ will be in $I_0$, $Y$ and $\widetilde{Y}$ agree on some neighborhood of $I\times \{0\}$. So, $\widetilde{Y}$ also solves the Björling Problem for timelike minimal surface with $c$ and $n$. Indeed, $Y\neq \widetilde{Y}$ on $\Omega$ as $\widetilde{F}>F$ in a neighborhood of $\frac{c+d}{2}$ and $(\frac{c+d}{2},0)\in \Omega$.  (We can analogously modify $G$ as well and the same conclusions will follow. )

\begin{note} Note that this is not inconsistent with the result proved in \cite{cha}. In that paper, it was shown that the solution to the Bj\"orling problem is unique on a rhombus $R$ (tilted square) in $\Omega$ with opposite vertices at $(a,0)$ and $(b,0)$. This rhombus will correspond to a square $S$ with vertices $(a,a),(a,b),(b,b)$ and $(b,a)$ in $rs$ plane. Indeed, $F=\widetilde{F}$ and $G=\widetilde{G}$ on $[a,b]$. So, for $(r,s)\in S$, $Y$ and $\widetilde{Y}$ agree and the uniqueness holds in $R$.
\end{note}

Thus, we get the following result.

\begin{thm}\label{non-unique}
If $Y:\Omega_1\to \Lo^3$ and $\widetilde{Y}:\Omega_2\to \Lo^3$ are two timelike minimal surfaces solving Björling Problem with $c$ and $n$, then $Y$ and $\widetilde{Y}$ do not necessarily agree on $\Omega_1 \cap \Omega_2$.
\end{thm}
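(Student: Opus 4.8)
The plan is to establish Theorem~\ref{non-unique} by exhibiting an explicit counterexample, following the construction laid out in the discussion immediately preceding the statement. The key observation is that two sets of Barbashov–Chernikov data $(F,G)$ and $(\widetilde F,\widetilde G)$ that agree on a neighborhood of the diagonal segment $\{(r,s)=(\alpha_0,\alpha_0):\alpha_0\in I\}$ will produce timelike minimal surfaces $Y,\widetilde Y$ via \eqref{x}, \eqref{y}, \eqref{p} (after the coordinate renaming of Remark~\ref{BI and TMS}) that agree on a neighborhood of $I\times\{0\}$ in the $\alpha\beta$-plane, hence solve the same Björling problem; but if $F$ and $\widetilde F$ differ somewhere else in the common domain, the two surfaces genuinely disagree there.

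First I would fix a $C^2$ pair $(F,G)$ and a domain $\Omega$ in the $\alpha\beta$-plane on which $\langle Y_\alpha,Y_\alpha\rangle_{\Lo^3}=(rs+1)^2F'(r)G'(s)$ has constant sign, so that by Remark~\ref{TMS from BI} the map $Y=(\phi,x,y)$ is a regular timelike minimal surface on $\Omega$; by replacing $F$ with $-F$ if necessary I may assume this quantity is positive, which in particular forces $F'$ and $G'$ to be nowhere zero on the relevant parameter intervals. Next I would choose the Björling data to be the restriction of this surface: $c(\alpha)=X(\alpha,0)$, $n(\alpha)=N(\alpha,0)$ for $\alpha$ in a nontrivial closed interval $I=[a,b]$ with $I\times\{0\}\subseteq\Omega$, so $Y$ itself is one solution. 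Using the identity $(\alpha,\beta)=(\alpha_0,0)\iff r=s=\alpha_0$, a neighborhood of $I\times\{0\}$ corresponds to a neighborhood of the diagonal segment $\{(t,t):t\in[a,b]\}$, and I would fix a slightly larger interval $I_0\supseteq I$ together with a disjoint open interval $J=(c,d)$ such that $(I_0\cup J)\times\{0\}$ is compactly contained in $\Omega$.

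Then I would perturb: take a smooth nonnegative bump $f$ supported in $J$ with $f((c+d)/2)>0$ and $|f'|\le\tfrac12|F'|$ on $J$, set $\widetilde F=F+f$, $\widetilde G=G$, and define $\widetilde Y$ by \eqref{x}, \eqref{y}, \eqref{p} with constants of integration chosen so that $\widetilde Y$ and $Y$ agree at $(\alpha,\beta)=(a,0)$. The bound on $|f'|$ guarantees $\widetilde F'=F'+f'$ keeps the sign of $F'$, so $(rs+1)^2\widetilde F'(r)\widetilde G'(s)$ stays positive and $\widetilde Y$ is again a regular timelike minimal surface on $\Omega$. Since $\widetilde F=F$ on $I_0$ and $\widetilde G=G$ everywhere, $Y$ and $\widetilde Y$ coincide on the set where both $r$ and $s$ lie in $I_0$, which is a neighborhood of $I\times\{0\}$; matching at $(a,0)$ pins down the integration constants so that the surfaces (not just their derivatives) agree there. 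Hence $\widetilde Y$ solves the same Björling problem with $(c,n)$. Finally, at the point $(\tfrac{c+d}{2},0)\in\Omega$ one has $r=s=\tfrac{c+d}{2}\in J$, where $\widetilde F>F$ while the height function depends on $F$ through $\int rF'(r)\,dr$, so $Y$ and $\widetilde Y$ differ there; thus they do not agree on the full overlap $\Omega_1\cap\Omega_2$, proving the theorem.

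The main obstacle, and the point deserving the most care, is verifying that after adding the bump the surface $\widetilde Y$ remains regular \emph{and} that the integration constants can genuinely be arranged so that $Y$ and $\widetilde Y$ agree on an open neighborhood of $I\times\{0\}$ rather than merely at a single point — this requires that on such a neighborhood $(r,s)$ stays inside $I_0\times I_0$, which follows from continuity of the map $(\alpha,\beta)\mapsto(r,s)=(\alpha+\beta,\alpha-\beta)$ and the fact that the diagonal segment maps into $I\times I\subseteq I_0\times I_0$. One should also check that the normal maps agree on this neighborhood, but this is automatic from Theorem~\ref{normal}, since $\widetilde N$ depends only on $(r,s)$ and not on $F,G$, and the sign is controlled by $\sign(\widetilde F'\widetilde G')=\sign(F'G')$ on $I_0$. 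The remaining verifications — consistency of the data, that the perturbation does not destroy the timelike character, and that the two surfaces are honestly distinct — are routine given the formulas and the earlier results.
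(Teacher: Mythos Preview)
Your proposal is correct and follows essentially the same construction as the paper: start from a Barbashov--Chernikov surface $Y=(\phi,x,y)$, read off the Bj\"orling data along $\beta=0$, and perturb $F$ by a bump supported on an interval $J$ disjoint from a neighborhood $I_0$ of $I$ to obtain a distinct surface $\widetilde Y$ agreeing with $Y$ near $I\times\{0\}$. Your added remarks---that regularity survives because $|f'|\le\tfrac12|F'|$ preserves the sign of $F'$, that the neighborhood in $(\alpha,\beta)$ maps into $I_0\times I_0$ by continuity of $(\alpha,\beta)\mapsto(\alpha+\beta,\alpha-\beta)$, and that the normals match automatically via Theorem~\ref{normal}---make explicit some points the paper leaves implicit, but the argument is otherwise the same.
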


\begin{note}\label{why not unique}
In case of maximal or minimal surface, uniqueness of the solution to Björling Problem comes from the identity theorem for holomorphic functions. However, identity theorem does not hold for real-valued smooth functions. And \eqref{x}, \eqref{y}, \eqref{p} give a family of timelike minimal surfaces $(\phi,x,y)$ in terms of real-valued functions $F$ and $G$. Thus, it is not surprising that we do not get uniqueness of solution to Björling Problem here.
\end{note}

Similar argument can be used for Björling Problem for BI soliton surface as well. (Start with a regular BI soliton surface, perturb the surface a little bit in such a way that it still remains regular and a graph over $xy$ plane and it still solves the Björling Problem.)

\begin{prop}\label{BI not unique}
If $Y:\Omega_1\to \B^3$ and $\widetilde{Y}:\Omega_2\to \B^3$ are two BI soliton surfaces solving Björling Problem with $c$ and $n$, then $Y$ and $\widetilde{Y}$ do not necessarily agree on $\Omega_1 \cap \Omega_2$.
\end{prop}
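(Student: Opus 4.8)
The plan is to mimic the non-uniqueness construction for timelike minimal surfaces (the argument preceding Theorem \ref{non-unique}), but to carry it out entirely on the BI soliton side so that the perturbed surface remains a graph over the $xy$ plane. First I would start from a regular BI soliton surface: pick $C^2$ functions $F$ and $G$ with $F'G'\neq 0$ on the relevant parameter region, so that equations \eqref{x}, \eqref{y}, \eqref{p} define a genuine BI soliton surface $X=(x,y,\phi)$ on a domain $\Omega$ in the $rs$ plane contained in $\{|rs|<1\}$ (shrinking $\Omega$ if necessary so that $(r,s)\mapsto(x,y)$ is a diffeomorphism onto its image, which is possible near any point by Note \ref{BI graph} and the inverse function theorem). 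As before, set $c(\alpha)=X(\alpha,0)$, $n(\alpha)=N(\alpha,0)$ for $\alpha$ in a nontrivial closed interval $I=[a,b]$ with $I\times\{0\}\subseteq\Omega$; then $X$ solves the Björling problem for $(c,n)$, and since $(\alpha,\beta)=(\alpha_0,0)\iff r=s=\alpha_0$, the curve data only ``sees'' the diagonal $r=s\in[a,b]$.

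Next I would perturb exactly as in the timelike case. Choose an open interval $J=(c,d)$ disjoint from a neighborhood $I_0$ of $I$ with $(I_0\cup J)\times\{0\}$ compactly contained in $\Omega$, and a smooth nonnegative bump function $f$ supported in $J$ with $f((c+d)/2)>0$ and $|f'|\leq |F'|/2$ on $J$. Put $\widetilde F:=F+f$, $\widetilde G:=G$, and let $\widetilde X$ be the surface given by \eqref{x}, \eqref{y}, \eqref{p} with these functions, fixing the constants of integration so that $X=\widetilde X$ at $(\alpha,\beta)=(a,0)$. The bound $|f'|\leq|F'|/2$ guarantees $\widetilde F'=F'+f'\neq 0$ wherever $F'\neq 0$, so $\widetilde F'\widetilde G'\neq 0$ on the parameter region and, after possibly shrinking the domain around $I_0\cup J$, $\widetilde X$ is again a regular surface which is a graph over the $xy$ plane; by Proposition \ref{general BI to BI} its height function is a BI soliton on $\{|rs|<1\}$ away from the (empty, here) zero set of $\widetilde F'\widetilde G'$. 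Since $\widetilde F=F$ and $\widetilde G=G$ on $I_0$, and for $(\alpha,\beta)$ near $I\times\{0\}$ both $r,s$ lie in $I_0$, the surfaces $X$ and $\widetilde X$ agree on a neighborhood of $I\times\{0\}$, hence $\widetilde X$ also solves the Björling problem for $(c,n)$. But $\widetilde F>F$ near $(c+d)/2$, so $X\neq\widetilde X$ at parameters $((c+d)/2,0)$, giving two BI soliton surfaces solving the same Björling problem that disagree on the overlap of their domains.

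The main obstacle, and the only point that is genuinely new relative to the timelike-minimal argument, is ensuring that the perturbed surface is still a graph over the $xy$ plane (so that it is an honest BI soliton surface in the sense of the paper, not merely a generalized one): the map $(r,s)\mapsto(x(r,s),y(r,s))$ must stay a local diffeomorphism after adding $f$. Here I would use that $(x,y)$ depends continuously (indeed $C^1$) on $F,G$ through their derivatives, that the unperturbed map is a diffeomorphism on a compact neighborhood of the diagonal segment $\{r=s\in I\}$, and that $f'$ is uniformly small on a slightly larger domain — so a standard perturbation/compactness argument keeps the Jacobian nonvanishing on a fixed open set containing the support of the perturbation. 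A secondary, routine check is that $\widetilde G=G$ is unchanged and $\widetilde F$ differs from $F$ only on $J$, which keeps the whole construction within $\{|rs|<1\}$ and away from non-regular points, so Theorem \ref{ZMC} (equivalently Proposition \ref{general BI to BI}) applies and $\widetilde X$'s height function genuinely satisfies \eqref{BI-graph} with the hyperbolicity condition.
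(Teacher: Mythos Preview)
Your proposal is correct and follows exactly the approach the paper itself indicates: the paper does not give a detailed proof of this proposition but simply says to repeat the perturbation argument preceding Theorem \ref{non-unique} on the BI soliton side, ``perturb[ing] the surface a little bit in such a way that it still remains regular and a graph over $xy$ plane and it still solves the Bj\"orling Problem.'' You have fleshed out precisely this sketch, including correctly identifying the one extra ingredient (keeping $(r,s)\mapsto(x,y)$ a diffeomorphism under a small perturbation of $F'$) that distinguishes the BI soliton case from the timelike minimal case.
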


\begin{center}
\section*{Acknowledgement}
\end{center}
This work was done during the summer of 2022 under S. N. Bhatt Memorial Excellence Fellowship Program offered by International Centre for Theoretical Sciences (ICTS), Bengaluru under the guidance of Prof. Rukmini Dey. I would like to thank her for suggesting the problem to me, for helpful discussions I had with her and for reviewing the paper thoroughly and giving valuable suggestions. I would also like to thank Sreedev Manikoth for the discussions I had with him. Finally, I want to thank ICTS for giving me opportunity to do this work.\\


\begin{thebibliography}{}

\bibitem{bar}{Barbashov, B., M. \& Chernikov, N. A. (1966). \emph{Solution and Quantization of a Nonlinear Two-dimensional Model for a Born-Infeld Type Field}. Journal of Experimental and Theoretical Physics 50, 1296-1308.}

\bibitem{cha} {Chaves, R. M. B., Dussan, M. P., \& Magid, M. (2011). \emph{Björling Problem for Timelike Surfaces in the Lorentz-Minkowski Space}. Journal of Mathematical Analysis and Applications 377, 481-494.}

\bibitem{dey} {Dey, R., \& Singh, R. K. (2017). \emph{Born-Infeld Solitons, Maximal Surfaces and Ramanujan’s Identities}. Archiv der Mathematik 108, 527-538.}

\bibitem{gale}{Gale, D., \& Nikaid\^{o}, H. (1965). \emph{The Jacobian Matrix and Global Univalence of Mappings}. Mathematische Annalen 159, 81-93.}

\bibitem{kim}{Kim, Y. W., Koh, S.-E., Shin, H., \& Yang, S.-D. (2011). \emph{Spacelike Maximal Surfaces, Timelike Minimal Surfaces, and Björling Representation Formulae}. Journal of the Korean Mathematical Society 48, 1083-1100.}

\bibitem{dev}{Manikoth, S.(2022).\emph{On the Bj\"orling Problem for Born-Infeld Solitons}.} arxiv.org/abs/2210.08752

\bibitem{whit} {Whitham, G. B. (1999). \emph{Linear and Nonlinear Waves (2nd ed.)}. John Wiley and Sons.}

\end{thebibliography}
\end{document}